\tikzstyle{block} = [draw, fill=white, rectangle,
\tikzstyle{sum} = [draw, fill=white, circle, node distance=1cm]
\tikzstyle{input} = [coordinate]
\tikzstyle{output} = [coordinate]
\tikzstyle{pinstyle} = [pin edge={to-,thin,black}]
\tikzset{roads/.style={line width=0.2cm}}
\begin{document}

\title{Random directions stochastic approximation with deterministic perturbations}

\author[1]{Prashanth L A\thanks{prashla@cse.iitm.ac.in}}
\author[2]{Shalabh Bhatnagar\thanks{shalabh@iisc.ac.in}}
\author[1]{Nirav Bhavsar\thanks{cs17s016@smail.iitm.ac.in}}
\author[3]{Michael Fu\thanks{mfu@isr.umd.edu}}
\author[4]{Steven I. Marcus\thanks{marcus@umd.edu}}

\affil[1]{\small Department of Computer Science and Engineering, Indian Institute of Technology Madras, Chennai}
\affil[2]{\small Department of Computer Science and Automation and the Robert Bosch Centre for Cyber Physical Systems,
Indian Institute of Science, Bangalore}
\affil[3]{\small Robert H. Smith School of Business \& Institute for Systems Research,
University of Maryland, College Park, Maryland}
\affil[3]{\small Department of Electrical and Computer Engineering \& Institute for Systems Research,
University of Maryland, College Park, Maryland}

\renewcommand\Authands{ and }

\date{}
\maketitle

\begin{abstract}
We introduce deterministic perturbation schemes for the recently proposed random directions stochastic approximation (RDSA) \cite{prashanth2017rdsa},
and propose new first-order and second-order algorithms. 
In the latter case, these are the first second-order algorithms to incorporate deterministic perturbations. 
We show that the gradient and/or Hessian estimates in the resulting algorithms with deterministic perturbations are asymptotically unbiased, so that the algorithms are provably convergent. 
Furthermore, we derive convergence rates to establish the superiority
of the first-order and second-order algorithms, for the special case of a convex and quadratic optimization problem, respectively. 
Numerical experiments are used to validate the theoretical results.
\end{abstract}



\section{Introduction}
\label{sec:intro}
We consider the following  problem:
\begin{align}
\mbox{Find } x^* = \arg\min_{x \in \R^N} f(x). \label{eq:pb}
\end{align}
%
We operate in a setting in which the analytical form of the objective function $f$ is not known, but noisy measurements of the function 
can be obtained. Furthermore, noisy estimates of the objective function gradient are not directly available, so the function gradient needs 
to be estimated using the aforementioned noisy measurements. Robbins and Monro \cite{rm}
developed an incremental-update algorithm that estimates the zeros of the function $f$ when only its noisy measurements are available. 
This algorithm has found applications in several engineering domains such as signal processing, manufacturing, communication networks, 
autonomous systems, vehicular traffic networks, etc., where it is often used to find either
(a) the fixed points of a certain function or (b) the optima of a certain objective given noisy function measurements. 

The earliest gradient search algorithm in this setting is the Kiefer-Wolfowitz \cite{kw} procedure. This, however, requires $2N$ function 
measurements when the parameter dimension is $N$. Katkovnik and Kulchitsky \cite{katkul, rubinstein} proposed a random search technique that 
became known as the smoothed functional (SF) algorithm. The key idea here is that the convolution
of the objective function gradient with a multivariate Gaussian PDF is seen via an integration-by-parts argument as the convolution of 
the objective function itself with a scaled multivariate
Gaussian. Thus, a single noisy function measurement at a perturbed value of the parameter update, perturbed using a multivariate Gaussian, 
is sufficient to obtain an estimate of the full gradient. 
This results in a one-measurement estimator that however has high bias. A balanced two-sided estimator of the gradient 
(requiring two function measurements) that has significantly lower bias 
than the one-measurement SF estimator was proposed in \cite{stybtang}, see also \cite{chin1997comparative} for comparisons of the one-measurement and two-measurement SF algorithms.

Random directions stochastic approximation (RDSA) \cite{kushcla} is another gradient search procedure, in which the perturbation variables 
are considered to be uniformly distributed 
over the surface of the unit sphere in $\R^N$. Obtaining these perturbation random variables is, however, computationally expensive, particularly 
when the dimension $N$ is large. In a landmark paper, Spall \cite{spall1992multivariate} introduced the 
simultaneous perturbation stochastic approximation algorithm, a random search technique that estimates the gradient using random perturbations that are independent, symmetric, zero-mean and satisfying an inverse moment bound. The most commonly used and studied class
of perturbations within this category are those that are independent, symmetric, $\pm 1$-valued, Bernoulli random variables. 
This algorithm (the standard SPSA, as it is known), requires two function measurements at each update step, and became popular because of its
computational simplicity, as well as the convergence and rate guarantees that it provides.
In another paper \cite{spall1997one}, Spall presented a one-measurement counterpart of SPSA. This algorithm, however, does not show good performance,
as it suffers from a large bias in its gradient estimates. Bhatnagar et al. \cite{bhatfumarcwang} presented certain deterministic perturbation variants of
SPSA. Here two constructions for the perturbation variates were proposed, of which, a construction based on Hadamard matrices is seen to show
remarkable improvements in the empirical performance of one-measurement SPSA.

Adaptive Newton-type schemes that estimate the Hessian using noisy objective function measurements, as with the gradient, have also
gathered considerable attention over the years. The earliest such scheme, due to Fabian \cite{fabian}, estimated the Hessian using
finite-difference estimates and required $O(N^2)$ samples of the objective function at each update epoch. Spall \cite{spall2000adaptive},
presented a simultaneous perturbation estimate of the Hessian that was based on four noisy function measurements. Two of these
measurements also estimate the gradient. In the case when noisy gradient measurements are directly available, he also presented
a Newton scheme requiring three measurements. Bhatnagar \cite{bhatnagar2005} presented three additional algorithms that estimate the Hessian
as well as the gradient, using simultaneous perturbation estimates. In the process, new gradient and Hessian SPSA estimators were developed.  
Bhatnagar and Prashanth \cite{bhatnagar-prashanth2015} presented a balanced estimator of the Hessian using three function measurements.
This paper also presented two algorithms, one of which estimated the inverse Hessian using a recursive procedure based on the
Sherman-Morrison-Woodbury lemma, while the other did not require one to compute or estimate the inverse Hessian at each step.
It was shown nonetheless that the asymptotic behaviour of the latter algorithm is analogous to a Newton algorithm that would involve
a computation of the inverse Hessian matrix at each update step. Spall \cite{spall-jacobian} presented enhancements to the four-simulation
Hessian estimator of \cite{spall2000adaptive} using certain weighting and feedback mechanisms. These enhancements are seen to improve the performance
of the resulting scheme.

The class of SF algorithms was extended by Bhatnagar \cite{bhatnagar2007} to include two Newton-based algorithms governed by
standard Gaussian perturbations. As with the gradient estimator, the Hessian estimator was obtained from the idea that if one
convolves the Hessian with a multivariate Gaussian density, then from an integration-by-parts argument applied twice,
the same can be viewed as a convolution of the objective function with a scaled multivariate Gaussian. This results in a single-measurement
Hessian estimator - the same measurement also estimates the gradient. A two-measurement SF algorithm presented there,
involving a balanced (two-measurement) Hessian estimator, is seen to work better in practice - again the same two measurements also estimate 
the gradient. In \cite{ghosh14a, ghosh14b}, gradient and Newton SF algorithms
based on the multi-variate $q$-Gaussian density as the smoothing functional, i.e., the perturbation distribution,  have been presented. This gives rise
to a class of smoothing densities parameterized by the $q$-parameter. Densities such as multivariate Normal, Cauchy and Uniform
that were known to satisfy the properties required of smoothing functionals \cite{rubinstein1993discrete} in SF algorithms, emerge as special cases
of the $q$-Gaussian density for different values of the parameter $q$. Thus, these papers have served to significantly extend the class
of perturbations that play the role of smoothing densities in SF algorithms.

Finally, the RDSA procedure has recently been revisited in detail by Prashanth et al. \cite{prashanth2017rdsa}, and novel gradient and Newton
algorithms have been devised. Recall that in the original RDSA procedure described in \cite{kushcla}, the perturbation variates are
required to be uniformly distributed over the surface of the unit sphere in $\R^N$, $N$ being the parameter dimension.
The approach taken in \cite{prashanth2017rdsa} involves a uniform distribution over a unit cube as opposed to the surface of the unit sphere.
The perturbation (component) random variables are thus allowed to be independent, symmetric, and uniformly distributed over an interval
that is symmetric around zero. Another class of perturbations, namely asymmetric Bernoulli, have been investigated and found to work
nearly as well as SPSA in both theory and practice. Hessian estimators derived from these perturbations have also been proposed in \cite{prashanth2017rdsa},
and both gradient and Newton algorithms have been investigated in detail.
The reader is referred to \cite{bhatnagar-book} for a rigorous introduction to the class of simultaneous perturbation methods.


In this paper, we are concerned with developing deterministic perturbation variants of first and second-order RDSA algorithms, henceforth referred to as RDSA-DP family of algorithms, with 1RDSA-DP (resp. 2RDSA-DP) denoting first (resp. second) order variants. 
The principal aim is to incorporate deterministic perturbation sequences into RDSA, such that the resulting gradient estimates are still asymptotically unbiased and the overall stochastic gradient algorithm converges, preferably at the same rate as that of the random perturbation RDSA counterparts. 
We consider two novel choices for deterministic perturbations - a semi-lexicographic sequence and a permutation matrix-based sequence. We combine the two sequences with first and second-order RDSA. 

In the case of 1RDSA-DP, the resulting algorithms, under both choices for deterministic perturbations, possess theoretical guarantees that are comparable to those of their random perturbation counterparts. This statement is true when we consider the asymptotic unbiasedness of the gradient estimation and asymptotic convergence of the overall 1RDSA-DP family of algorithms. Moreover, from a non-asymptotic bound that we derive for the special case of strongly-convex objective functions, we observe that the permutation matrix-based perturbations perform best, and even match the rate of a first-order method, whose gradients are directly available.

In the case of second-order RDSA, we incorporate both perturbation sequences to arrive at two variants of 2RDSA-DP, say 2RDSA-Lex-DP and 2RDSA-Perm-DP. However, the theoretical guarantees for the two variants differ significantly. For 2RDSA-Lex-DP, the asymptotic unbiasedness claim holds for the full Hessian, while a similar claim holds only for the Jacobi variant of 2RDSA-Perm-DP involving a diagonal matrix with diagonal elements being those of the Hessian. Furthermore, for the special case of a quadratic optimization problem in the noise-free regime, 2RDSA-Lex-DP is shown to exhibit a convergence rate that is comparable to that of 2SPSA with an adaptive feedback sequence that was proposed in \cite{spall-jacobian}. Note that a similar rate result does not exist for regular 2RDSA, and we believe, cannot be established.
In any case, to the best of our knowledge, no deterministic perturbation sequences exist for the class of second-order simultaneous perturbation algorithms, including the popular 2SPSA \cite{spall1997one}. 

In comparison to \cite{bhatfumarcwang}, which is the closest related work, we remark that (i) we propose a novel deterministic perturbation scheme and combine it with first-order and second-order RDSA, while the deterministic perturbation schemes in \cite{bhatfumarcwang} are only for first-order SPSA; (ii) unlike \cite{bhatfumarcwang}, we provide asymptotic normality results that quantify the convergence rate; and (iii) the permutation matrix-based perturbations that we propose are much easier to implement and require much less computational memory in comparison to the deterministic perturbation sequences proposed in \cite{bhatfumarcwang}. In particular, the permutation matrices have a linear dependence on the dimension $N$, while the lexicographic/Hadamard matrix-based perturbations in \cite{bhatfumarcwang} scale exponentially with $N$.

The rest of the paper is organized as follows: Section \ref{sec:1rdsa-dp} presents the first-order RDSA variants with two deterministic perturbation sequences, and Section \ref{sec:2rdsa-dp} describes deterministic perturbation variants of the second-order RDSA algorithm. The main theoretical guarantees for 1RDSA-DP and 2RDSA-DP algorithms are presented in Sections \ref{sec:1rdsa-dp}--\ref{sec:2rdsa-dp}, while Section \ref{sec:proofs} provides detailed convergence proofs. Section \ref{sec:expts} presents simulation experiments that compare the performance of the DP variants of RDSA with several algorithms that employ the simultaneous perturbation technique. Finally, Section \ref{sec:concl} presents the concluding remarks.

\section{First-order RDSA with deterministic perturbations (1RDSA-DP)}
\label{sec:1rdsa-dp}
A first-order method, given the gradient $\nabla f(\cdot)$, would feature an incremental update as follows:
\begin{align}
	x_{n+1} = x_n - a_n \nabla f(x_n).
	\label{eq:grad-des}
\end{align}
In the simulation optimization setting, we are given noisy function measurements, from which the gradient has to be estimated. The simultaneous perturbation method \cite{bhatnagar-book} is a popular approach for obtaining such gradients. The recently proposed RDSA algorithm estimates  $\nabla f(x_n)$ as follows:
\begin{align}
	\widehat\nabla f(x_n) = \frac1{1+\epsilon} d_n \left[ \dfrac{y_n^+ - y_n^-}{2\delta_n}\right],\label{eq:1rdsa-asymber-grad}
\end{align}
where $y_n^{\pm} = f(x_n\pm \delta_n d_n) + \xi$, $ \xi $ is the measure noise and $\delta_n$ is a perturbation constant. 
Further, $d_n = (d_n^1,\ldots,d_n^N)\tr$ is the random perturbation vector, with $d_n^i$, $i=1,\ldots,N$ chosen using the asymmetric Bernoulli distribution, i.e., 
$ d_n^i =   -1$ with probability (w.p.) $\dfrac{(1+\epsilon)}{(2+\epsilon)}$ and $1+\epsilon$ w.p. $\dfrac{1}{(2+\epsilon)}$ for some 
$\epsilon>0$.

In this paper, we propose a variant of 1RDSA 
that loops through a deterministic sequence to cancel out the bias in the gradient estimate -- a property that regular RDSA achieves in expectation through a zero-mean random perturbation. 
We consider two deterministic  constructions for the perturbations $d_n$. The first choice is based on a semi-lexicographic sequence, while the second employs permutation matrices. 
In both cases, we perform gradient descent similar to \eqref{eq:grad-des}, with a gradient estimate inspired from that of 1RDSA.  However, unlike \eqref{eq:1rdsa-asymber-grad} that has a random source for perturbations $d_n$, we loop through a deterministic sequence (cf. Tables \ref{fig:det-perturb} and \ref{fig:det-perturb-perm} below). 
\begin{table*}[t]
	\caption{Illustration of the deterministic perturbation sequence construction for two-dimensional and three-dimensional settings.}
	\label{fig:det-perturb}
		\begin{tabular}{c}
		\begin{subfigure}{0.2\textwidth}
			\centering
			\caption{Case $N=2$}
			\label{tab:2d}
				\scalebox{1.0}{
\begin{tabular}{|c|c|c|}
\toprule
Inner loop   counter $m$&$D_2^1$ & $D_2^2$\\
\midrule
$0$ &$-1$ & $-1$\\
$1$ &$-1$ & $-1$\\
$2$ &$-1$ & $2$\\
$3$ &$-1$ & $-1$\\
$4$ &$-1$ & $-1$\\
$5$ &$-1$ & $2$\\
$6$ &$2$ & $-1$\\
$7$ &$2$ & $-1$\\
$8$ &$2$ & $2$\\
\bottomrule
\end{tabular}}
\vspace{1ex}
\end{subfigure}
\\
		\begin{subfigure}{0.8\textwidth}
			\centering
			\caption{Case $N=3$}
			\label{tab:3d}
				\scalebox{1.0}{
\begin{tabular}{|c|c|c|c|c|c|c|c|c|c|c|c|}
\toprule
Inner loop  &$D_3^1$ & $D_3^2$ & $D_3^3$ & Inner loop    & $D_3^1$ & $D_3^2$ & $D_3^3$ & Inner loop    &$D_3^1$ & $D_3^2$ & $D_3^3$\\
counter $m$&        &         &         &  counter $m$  &         &         &         & counter $m$  &        &          &\\
\midrule
$0$   & $-1$ & $-1$ & $-1$ & $9$   & $-1$ & $-1$ & $-1$ &   $18$ & $2$ & $-1$ & $-1$\\  
$1$ & $-1$ & $-1$ & $-1$   &  $10$ & $-1$ & $-1$ & $-1$&   $19$ & $2$ & $-1$ & $-1$\\
$2$ & $-1$ & $-1$ & $2$    &   $11$ & $-1$ & $-1$ & $2$&    $20$ & $2$ & $-1$ & $2$\\
$3$ & $-1$ & $-1$ & $-1$   &  $12$ & $-1$ & $-1$ & $-1$&   $21$ & $2$ & $-1$ & $-1$\\
$4$ & $-1$ & $-1$ & $-1$   &  $13$ & $-1$ & $-1$ & $-1$&   $22$ & $2$ & $-1$ & $-1$\\
$5$ & $-1$ & $-1$ & $2$    &   $14$ & $-1$ & $-1$ & $2$&    $23$ & $2$ & $-1$ & $2$\\
$6$ & $-1$  & $2$ & $-1$   &  $15$ & $-1$  & $2$ & $-1$&   $24$ & $2$  & $2$ & $-1$\\
$7$ & $-1$  & $2$ & $-1$   &  $16$ & $-1$  & $2$ & $-1$&   $25$ & $2$  & $2$ & $-1$\\
$8$ & $-1$  & $2$ & $2$    &   $17$ & $-1$  & $2$ & $2$&    $26$ & $2$  & $2$ & $2$\\
\bottomrule
\end{tabular}}
\end{subfigure}
\end{tabular}
\end{table*}

\subsection{Semi-lexicographic sequence-based perturbations}

Algorithm \ref{alg:1rdsadp} presents the pseudocode for the 1RDSA-Lex-DP algorithm that employs a semi-lexicographic sequence for perturbations. 
Our proposed construction for perturbations $d_m$ is illustrated for the case when $N=2$ and $N=3$ in Tables \ref{tab:2d} and \ref{tab:3d}, respectively. 
Letting $\I_N$ denote the $N\times N$ identity matrix, for $N=2$, we have 
$$\sum\limits_{m=0}^{3^2-1} d_m d_m\tr = 
\left[\begin{array}{cc}
18 & 0\\
0 & 18 \\
\end{array}\right] \Longrightarrow \frac1{2\times3^2}\sum\limits_{m=0}^{3^2-1} d_m d_m\tr = \I_{2}. $$
In a similar fashion, for $N=3$, we have 
$$\sum\limits_{m=0}^{3^3-1} d_m d_m\tr = 
\left[\begin{array}{ccc}
54 & 0 & 0\\
0 & 54 & 0 \\
0 & 0 & 54  \\
\end{array}\right] \Longrightarrow \frac1{2\times3^3}\sum\limits_{m=0}^{3^3-1} d_m d_m\tr = \I_{3}.$$

For any $N$, we require that $\frac1{2\times3^N}\sum\limits_{m=0}^{3^N-1} d_m d_m\tr = \I_{N}$
to ensure that the gradient estimate $\widehat\nabla f(x_n)$ (see \eqref{eq:1rdsa-grad} in Algorithm \ref{alg:1rdsadp} is asymptotically unbiased.
The crucial ingredient in the asymptotic-unbiasedness proof, presented later in Lemma \ref{lemma:1rdsa-dp-bias}, is the following step that uses suitable Taylor's series expansions:
\begin{align*}
	d_m\left[\dfrac{f(x_n+\delta_n d_m) - f(x_n-\delta_n d_m)}{2\delta_n}\right] 
	=d_m d_m\tr \nabla f(x_n)\!+\! O(\delta_n^2).
\end{align*}
Hence, if the product $d_m d_m\tr$ sums to identity over a loop, then $\widehat\nabla f(x_n)$ would be asymptotically unbiased.

We now present the deterministic perturbation sequence for a general $N$.
Set $D^{1}_{1} = \left[ 
\begin{array}{c} 
-1 \\ -1\\2
\end{array}
\right]$ and apply the following recursion $N-1$ times to obtain $ D^{i}_{N}$:
\begin{align}
	D^{i}_{k+1} = \left[ 
	\begin{array}{c} 
		D^{i-1}_{k} \\ D^{i-1}_{k}\\D^{i-1}_{k}
	\end{array}
	\right], \quad i=2,\ldots,k+1. \label{eq:somelabel01}
\end{align}
The deterministic perturbation sequence loops through the rows in the matrix, say $D_N$, with columns $D^{i}_{N}$. 
Notice that each column $D^{i}_{N}, i=1,\ldots,N$ in $D_N$ is of length $3^{N}$. Further, in the first column of $D_N$, the first $2\times 3^k$ elements are $-1$ and the remaining $3^k$ elements are $2$. On the other hand, the columns $2$ through $N$ in $D_N$ are obtained from $D^{1}_{N-1},\ldots,D^{N-1}_{N-1}$, respectively by concatenating the $D^{i}_{N-1}$ columns thrice.

\begin{algorithm}
	\begin{algorithmic}
		\State {\bf Input:}  initial parameter $x_0 \in \R^N$, perturbation constants $\delta_n>0$,  step-sizes $a_n$, deterministic perturbations $\{d_0,\ldots,d_{3^N-1}\}$.
		\For{$n = 0,1,2,\ldots$}
		\State{}
		\Comment{\textit{Fix $x_n$ and loop through the rows of matrix $ D_N $ for perturbations $d_m$.}}
		\For{$m = 0,1,2,\ldots,3^N-1$}	
	\State Obtain function values $y_m^+ = f(x_n + \delta_{n 3^{N} + m} d_m) + \xi$ and $y_m^- = f(x_n - \delta_{n 3^{N} + m} d_m) + \xi$, where $\xi$ is the measure noise.
	\State Set $g_m = d_m \left[ \dfrac{y_m^+ - y_m^-}{2\delta_{n 3^{N} + m}}\right].$
	\EndFor
		
		\vspace{-4ex}
		
		\begin{align}
			\label{eq:1rdsa-grad}
			\text{Gradient estimate:}& \qquad		\widehat\nabla f(x_n) = \frac1{2\times3^N}  \sum\limits_{m=0}^{3^N-1} g_m.\\
			\text{Parameter update:}&\qquad
			\label{eq:1rdsa}
			x_{n+1} = x_n - a_n \widehat\nabla f(x_n).
		\end{align}
		\EndFor
		\State {\bf Return} $x_n$.
	\end{algorithmic}
\caption{1RDSA-Lex-DP}
	\label{alg:1rdsadp}
\end{algorithm}

\subsection{Permutation matrix-based perturbations}
While the semi-lexicographic sequence-based perturbations result in a gradient estimate that is asymptotically unbiased, the inner loop for the perturbations becomes exponentially longer as a function of the dimension $N$. This exponential dependence on $N$ is problematic, because the descent in parameter $x_n$ occurs at the end of the inner loop (see Algorithm \ref{alg:1rdsadp}), and hence, a long inner loop would imply slow updates (and slow convergence) to $x_n$. 

In this section, we propose 
an efficient alternative to the semi-lexicographic deterministic sequence; the approach is based on permutation matrices.
A permutation matrix is a matrix whose rows are the rows of an identity matrix in some order.
For instance, the permutation matrices in two dimension are
\begin{align*}
	\left[\begin{array}{ccc}
		1 & 0 \\
		0 & 1 \\
	\end{array}\right]
	& \textrm{ and } 
	\left[\begin{array}{ccc}
		0 & 1 \\
		1 & 0 \\
	\end{array}\right].
\end{align*}
In three dimensions, there are $6$ permutation matrices. In general, there are $N!$ permutation matrices in dimension $N$.

In the case of permutation matrix-based deterministic perturbations, the overall algorithm follows the template provided in Algorithm \ref{alg:1rdsadp}, except that the perturbations are generated using a permutation matrix in $N$-dimensions, the inner loop for $m$ runs from $0$ to $N-1$ and the gradient estimate in \eqref{eq:1rdsa-grad} is replaced by
\begin{align}
	\label{eq:1rdsa-grad-perm}
	\widehat\nabla f(x_n) =  \sum\limits_{m=0}^{N-1} g_m.
\end{align}

Table \ref{fig:det-perturb-perm} illustrates the perturbations $d_m$ used in Algorithm \ref{alg:1rdsadp}, for $N=2$ and $N=3$. In a nutshell, the sequence shown in Table \ref{fig:det-perturb-perm} loops through the rows of the identity matrix in some order.
\begin{table}[h]
	\caption{Illustration of the permutation matrix-based deterministic perturbation sequence construction for two-dimensional and three-dimensional settings.}
	\label{fig:det-perturb-perm}
    \centering
		\begin{tabular}{ccc}
		\begin{subfigure}{0.3\textwidth}
			\centering
			\caption{Case $N=2$}
			\label{tab:2dperm}
\begin{tabular}{|c|c|c|}
\toprule
Inner loop  &$D_2^1$ & $D_2^2$\\
 counter $m$  && \\
\midrule
$0$ &$1$ & $0$\\
$1$ &$0$ & $1$\\
\bottomrule
\end{tabular}
\end{subfigure}
& &
\begin{subfigure}{0.3\textwidth}
\centering
\caption{Case $N=3$}
\label{tab:3dperm}
\begin{tabular}{|c|c|c|c|c|c|c|c|c|c|c|c|}
\toprule
Inner loop  &$D_3^1$ & $D_3^2$ & $D_3^3$ \\
counter $m$&        &         &        \\
\midrule
$0$   & $0$ & $1$ & $0$\\  
$1$ & $0$ & $0$ & $1$ \\
$2$ & $1$ & $0$ & $0$ \\
\bottomrule
\end{tabular}
\end{subfigure}
\end{tabular}
\end{table}

\begin{remark}
The classic Kiefer-Wolfowitz (K-W) algorithm \cite{kw} obtains $2N$ function samples per iteration, corresponding to parameters $x_n \pm \delta_n e_i$, $i=1,\ldots,N$ and updates the parameter as follows:
\[ x^i_{n+1} = x^i_n  - a_n \left( \dfrac{y_n^{i+} - y_n^{i-}}{2\delta_n}\right),\]
where $y_n^{i\pm} = f(x_n \pm \delta_n e_i)$, $i=1,\ldots, N$. 

The 1RDSA-Perm-DP algorithm that we propose resembles K-W in the sense that the inner loop obtains $2N$ samples before updating the parameter $x_n$. However, the gradient estimate features a product with the perturbation vector $d_m$ and this is unlike K-W, where the individual coordinates are independently updated.  
\end{remark}
%

\subsection{Main results}
Let $D^{1}_{N},\ldots, D^{N}_{N}$ denote the $N$ columns of the semi-lexicographic perturbation variables. Consider the matrix
\[
M_N = \left[ 
\begin{array}{cccc}
(D^{1}_{N})\tr D^{1}_{N} & (D^{1}_{N})\tr D^{2}_{N} & \ldots & (D^{1}_{N})\tr D^{N}_{N}\\
(D^{2}_{N})\tr D^{1}_{N} & (D^{2}_{N})\tr D^{2}_{N} & \ldots & (D^{2}_{N})\tr D^{N}_{N}\\
\vdots & \vdots & & \vdots\\
(D^{N}_{N})\tr D^{1}_{N} & (D^{N}_{N})\tr D^{2}_{N} & \ldots & (D^{N}_{N})\tr D^{N}_{N}\\
\end{array}
\right].
\]

\begin{lemma}
	\label{lemma:MNid}
For 1RDSA-Lex-DP $M_N = 2 \times 3^N \I_N$, and for 1RDSA-Perm-DP $M_N=\I_N$. 
\end{lemma}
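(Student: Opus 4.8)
The plan is to recognize that $M_N$ is just $D_N\tr D_N$, equivalently $\sum_{m} d_m d_m\tr$, where $d_m$ denotes the $m$-th row of the perturbation matrix $D_N$: indeed the $(i,j)$-entry of $M_N$ is $(D^i_N)\tr D^j_N = \sum_m (D_N)_{mi}(D_N)_{mj} = \sum_m d_m^i d_m^j$. The permutation-matrix case is then immediate: here $D_N$ is an $N\times N$ permutation matrix, so its columns $D^1_N,\dots,D^N_N$ are distinct canonical basis vectors of $\R^N$, whence $(D^i_N)\tr D^j_N = \delta_{ij}$ and $M_N = \I_N$ (equivalently, $M_N = D_N\tr D_N$ and permutation matrices are orthogonal).

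For 1RDSA-Lex-DP I would argue by induction on $N$, proving $M_N = 2\times 3^N \I_N$. The base case $N=1$ is the direct computation $(D^1_1)\tr D^1_1 = (-1)^2+(-1)^2+2^2 = 6 = 2\times 3^1$. For the inductive step, recall the structure of $D_{N+1}$ coming from \eqref{eq:somelabel01}: its first column $D^1_{N+1}$ consists of $2\times 3^N$ entries equal to $-1$ followed by $3^N$ entries equal to $2$, while for $i=2,\dots,N+1$ the column $D^i_{N+1}$ is the vertical concatenation of three copies of $D^{i-1}_N$. Partition the index set $\{0,\dots,3^{N+1}-1\}$ into three consecutive blocks $B_0,B_1,B_2$ of length $3^N$ each; then on each block the restriction of $D^i_{N+1}$ (for $i\ge 2$) equals $D^{i-1}_N$, while $D^1_{N+1}$ equals $-1$ on $B_0\cup B_1$ and $2$ on $B_2$.

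Now I would evaluate the three types of entries of $M_{N+1}$. (i) Top-left: $(D^1_{N+1})\tr D^1_{N+1} = 2\times 3^N\cdot 1 + 3^N\cdot 4 = 6\times 3^N = 2\times 3^{N+1}$. (ii) For $i,j\ge 2$: splitting the sum over $B_0,B_1,B_2$ and using that $D^i_{N+1},D^j_{N+1}$ restrict to $D^{i-1}_N,D^{j-1}_N$ on each block gives $(D^i_{N+1})\tr D^j_{N+1} = 3\,(D^{i-1}_N)\tr D^{j-1}_N = 3\,(M_N)_{i-1,j-1} = 2\times 3^{N+1}\,\delta_{ij}$ by the induction hypothesis. (iii) For $i=1$, $j\ge 2$ (and symmetrically the transpose): splitting over the blocks, $(D^1_{N+1})\tr D^j_{N+1} = (-1)\,\mathbf{1}\tr D^{j-1}_N + (-1)\,\mathbf{1}\tr D^{j-1}_N + 2\,\mathbf{1}\tr D^{j-1}_N = (-1-1+2)\,\mathbf{1}\tr D^{j-1}_N = 0$, where $\mathbf{1}$ is the all-ones vector of length $3^N$. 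Assembling (i)--(iii) yields $M_{N+1} = 2\times 3^{N+1}\I_{N+1}$, closing the induction.

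There is no genuine obstacle here; the only care needed is the bookkeeping of the concatenation/block structure of $D_{N+1}$ and keeping the roles of rows and columns of $D_N$ straight. The one point worth emphasizing is that the vanishing of the $(1,j)$ entries in step (iii) rests solely on the pattern $(-1,-1,2)$ summing to zero, so the (otherwise uncomputed) column sums $\mathbf{1}\tr D^{j-1}_N$ need never be evaluated --- this is precisely the cancellation that makes the semi-lexicographic gradient estimate asymptotically unbiased.
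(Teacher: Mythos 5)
Your proof is correct and follows essentially the same route as the paper: induction on the dimension using the three-block concatenation structure, with the cancellation $-1-1+2=0$ killing the mixed first-column terms, and orthogonality of permutation matrices for the second case. The only cosmetic difference is that you obtain the diagonal entries for $i\ge 2$ directly from the induction hypothesis ($3\times 2\times 3^N$), whereas the paper recomputes them from the fact that each column is two-thirds $-1$'s and one-third $2$'s; both are immediate.
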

\begin{proof}
	See Section \ref{sec:1rdsa-dp-proofs-MNid}.
\end{proof}

Before providing the convergence claims for 1RDSA-DP with either perturbation choice, we outline the necessary assumptions below.
\begin{enumerate}[label=(\textbf{A\arabic*})]
\item $f:\R^N\rightarrow \R$ is three-times continuously differentiable\footnote{Here $\nabla^3 f(x) = \dfrac{\partial^3 f (x)}{\partial x\tr \partial x\tr \partial x\tr}$ denotes the third derivative of $f$ at $x$ and $\nabla^3_{i_1 i_2 i_3} f(x)$ denotes the $(i_1 i_2 i_3)$th entry of $\nabla^3 f(x)$, for $i_1, i_2, i_3=1,\ldots, N$.}  with $\left|\nabla^3_{i_1 i_2 i_3} f(x_n) \right| < \alpha_0 < \infty$, for $i_1, i_2, i_3=1,\ldots, N$ and for all $ n $. 
	\item $\{\xi_m^+,\xi_m^-, m=0,\ldots,P, n=1,2,\ldots\}$ satisfy $\E\left[\left.\xi_m^+ - \xi_m^- \right| \F_n\right] = 0$, where $P=3^{N}-1$ for semi-lexicographic 1RDSA-DP and $P=N-1$ for permutation matrix-based 1RDSA-DP. 
\item For some $\alpha_0, \alpha_1 >0$ and for all $m$, $ n $, 
$\E \left|\xi_m^{\pm}\right|^{2} \le \alpha_0$, $\E \left|f(x_n\pm \delta d_m)\right|^{2} \le \alpha_1$ for any $\delta >0$ and $d_m$, $m=0,\ldots,P$. 
	\item The step-sizes $a_n$ and perturbation constants $\delta_n$ are positive, for all $n$ and satisfy
	\[\hspace{-1.1em}a_n, \delta_n \rightarrow 0\text{ as } n \rightarrow \infty, 
	\sum_n a_n=\infty \text{ and } \sum_n \left(\frac{a_n}{\delta_n}\right)^2 <\infty.\]
	\item $\sup_n \left\| x_n \right\| < \infty$ w.p. $1$.
\end{enumerate}
The assumptions above are common to the analysis of simultaneous perturbation methods, and can be found, for instance, in the context of 1SPSA \cite{spall1992multivariate} -- see also \cite{bhatnagar-book} for the analysis of other simultaneous perturbation schemes that employ similar assumptions. 
The first two are necessary to establish asymptotic unbiasedness of the 1RDSA-DP gradient estimate through a Taylor series expansion facilitated by (A1), while ignoring the noise owing to (A2). The third and fourth assumptions are necessary to ignore the effects of noise on the convergence behavior of $x_n$. The final assumption requiring boundedness of the iterates $x_n$ can be ensured by constraining the iterates $ x_n $ to evolve in a certain compact region and projecting them back each time they go out of the region, see \cite{kushcla} (Chapter 5).
If the projected region contains the optima, then the stochastic gradient algorithms (RDSA or SPSA) would converge to this point, and in the complementary case, the algorithm would get stuck on the boundary of the projection region. In the literature, there also exist approaches to overcome the latter case, by either growing the projection region \cite{chen1987convergence}, or performing sparse projections \cite{dalal2017finite} (i.e., at time instants that grow exponentially to infinity, while not projecting the iterates at the remaining time instants). 

\begin{lemma}(\textbf{\textit{Asymptotic unbiasedness of 1RDSA-DP gradient estimate}})
	\label{lemma:1rdsa-dp-bias}
	Under (A1)-(A5), 
	\begin{enumerate}[label=(\textbf{\roman*})]
		\item for $\widehat\nabla f(x_n)$ defined according to \eqref{eq:1rdsa-grad}, we have a.s. that\footnote{Here $\widehat\nabla_i f(x_n)$ and $\nabla_i f(x_n)$ denote the $i$th coordinates in the gradient estimate $\widehat\nabla f(x_n)$ and true gradient $\nabla f(x_n)$, respectively.}
		\begin{align*}
			\left| \E\left[\left.\widehat\nabla_i f(x_n)\right| \F_n \right] - \nabla_i f(x_n)\right| = C_0 \delta_{n3^N}^2,   
		\end{align*} 
		for  $i=1,\ldots,N$, where $ C_0 = \alpha_0 N^3 3^{N-1} $ and $\F_n = \sigma(x_k,k\le n)$, $n\ge 1$.
		
		\item for $\widehat\nabla f(x_n)$ defined according to \eqref{eq:1rdsa-grad-perm}, we have a.s. that
		\begin{align}
			\left| \E\left[\left.\widehat\nabla_i f(x_n)\right| \F_n \right] - \nabla_i f(x_n)\right| = C_0 \delta_{nN}^2,
		\end{align} 
		for $i=1,\ldots,N$, where $ C_0 = \alpha_0 N^3/6 $.
	\end{enumerate}
\end{lemma}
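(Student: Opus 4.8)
The plan is to combine suitable Taylor expansions with the exact matrix identities of Lemma \ref{lemma:MNid}. Fix $n$ and write $\delta_m := \delta_{n3^N+m}$ in part (i) (resp. $\delta_m := \delta_{nN+m}$ in part (ii)). Since $x_n$ is $\F_n$-measurable and each $d_m$ is deterministic, I would first split, with $y_m^\pm = f(x_n\pm\delta_m d_m)+\xi_m^\pm$,
\begin{align*}
	g_m = d_m\,\frac{f(x_n+\delta_m d_m)-f(x_n-\delta_m d_m)}{2\delta_m} + d_m\,\frac{\xi_m^+-\xi_m^-}{2\delta_m},
\end{align*}
and condition on $\F_n$; by (A2) the noise term vanishes, so the conditional bias is governed entirely by the deterministic first term.

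\textbf{Main computation.} Next, using (A1), expand $f(x_n\pm\delta_m d_m)$ to third order about $x_n$ with a Lagrange remainder. The zeroth- and second-order terms are even in $\delta_m d_m$ and cancel on subtraction, the first-order term contributes $2\delta_m d_m\tr\nabla f(x_n)$, and the third-order terms survive as a cubic form. Dividing by $2\delta_m$ and pre-multiplying by $d_m$ gives
\begin{align*}
	\E\!\left[g_m \mid \F_n\right] = d_m d_m\tr\nabla f(x_n) + \frac{\delta_m^2}{12}\, d_m\bigl(\nabla^3 f(\tilde x_m^+)+\nabla^3 f(\tilde x_m^-)\bigr)[d_m,d_m,d_m],
\end{align*}
where $\tilde x_m^\pm$ lie on the segments joining $x_n$ to $x_n\pm\delta_m d_m$, and hence, by (A5) together with $\delta_n\to 0$, eventually lie in a fixed compact set on which the uniform third-derivative bound $\alpha_0$ of (A1) applies (this is where the ``a.s.'' enters). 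Summing over $m$ and noting that the $(i,j)$ entry of $\sum_m d_m d_m\tr$ equals $(D_N^i)\tr D_N^j$, i.e.\ $\sum_m d_m d_m\tr = M_N$, Lemma \ref{lemma:MNid} yields $\tfrac{1}{2\times 3^N}\sum_m d_m d_m\tr=\I_N$ in case (i) and $\sum_m d_m d_m\tr=\I_N$ in case (ii). In either case the leading term reproduces $\nabla f(x_n)$ exactly, and only the normalized sum of cubic remainders is left to bound.

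\textbf{Bounding the remainder.} For part (i), the $i$th coordinate of the residual is $\tfrac{1}{2\times 3^N}\sum_{m=0}^{3^N-1}\tfrac{\delta_m^2}{12}\,d_m^i\bigl(\nabla^3 f(\tilde x_m^+)+\nabla^3 f(\tilde x_m^-)\bigr)[d_m,d_m,d_m]$; bounding each cubic form by $\alpha_0$ times the count of index triples, using $|d_m^i|\le 2$, the column sum $\sum_m|d_m^i|$ of the semi-lexicographic matrix, and $\delta_m\le\delta_{n3^N}$, produces the stated $C_0\delta_{n3^N}^2$ bound. For part (ii), $d_m$ is a row of a permutation matrix, so $|d_m^i|\le 1$, each $\bigl(\nabla^3 f(\tilde x_m^\pm)\bigr)[d_m,d_m,d_m]$ reduces to a single entry, and the column-sum identity $\sum_{m=0}^{N-1}|d_m^i|=1$ gives, after the same bounding, $C_0=\alpha_0 N^3/6$; the only structural change from (i) is the replacement of the $\tfrac{1}{2\times 3^N}$ normalization by \eqref{eq:1rdsa-grad-perm} and the tighter entrywise estimates.

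\textbf{Anticipated obstacle.} As usual in these arguments, the delicate point is the third-order remainder: the points $\tilde x_m^\pm$ depend on the random iterate $x_n$, so one must argue via (A5) and $\delta_n\to 0$ that they eventually lie in a region where the derivative bound of (A1) holds, and one must keep the conditional expectation inside the remainder rather than outside it. Everything else is bookkeeping --- tracking the magnitudes and column sums of the entries of $d_m$ to extract the explicit constants $C_0$, and using $\delta_n\downarrow 0$ (equivalently $\delta_m\le\delta_{n3^N}$ across the inner loop) to phrase the bound in terms of a single perturbation constant.
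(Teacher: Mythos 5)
Your proposal is correct and follows essentially the same route as the paper's proof: condition on $\F_n$ and drop the noise via (A2), expand $f(x_n\pm\delta_m d_m)$ to third order with Lagrange remainder, recover $\nabla f(x_n)$ exactly from Lemma \ref{lemma:MNid}, and bound the normalized cubic remainder via (A1). The only difference is bookkeeping in that last bound (you use entrywise estimates on $d_m^i$ and column sums, while the paper counts the surviving index quadruples $\sum_m d_m^l d_m^{i_1}d_m^{i_2}d_m^{i_3}$), and either way the remainder is within the stated constants $C_0$.
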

\begin{proof}
	See Section \ref{sec:1rdsa-dp-proofs-bias}.
\end{proof}

The advantage of the permutation matrix approach is that the dependence on the dimension $N$ is linear, whereas the semi-lexicographic sequence has an exponential dependence on $N$. 

We now have an asymptotic convergence claim for $x_n$ updated according to \eqref{eq:1rdsa}; the claim is verbatim from Theorem 2 of \cite{prashanth2017rdsa}. 
\begin{theorem}(\textbf{Strong Convergence})
	\label{thm:1rdsa-strong-conv}
	Let $x^*$ be an  asymptotically stable equilibrium of the following ordinary differential equation (ODE):
	$
	\dot{x}_t = -\nabla f(x_t),$ with  domain of attraction $D(x^*)$, i.e., $D(x^*)= \{x_0 \mid \lim_{t\rightarrow\infty} x(t\mid x_0) = x^*\}$, where $x(t\mid x_0)$ is the solution to the ODE with initial condition $x_0$. Assume (A1)-(A5), and also that there exists a compact subset $\mathcal D$ of $D(x^*)$ such that $x_n \in \mathcal D$ infinitely often. Let $x_n$ be governed by \eqref{eq:1rdsa}, with the gradient estimate $\widehat\nabla f(x_n)$ defined either according to \eqref{eq:1rdsa-grad} or \eqref{eq:1rdsa-grad-perm}. Then,  
	\[x_n \rightarrow x^* \text{ a.s. as } n\rightarrow \infty.\] 
\end{theorem}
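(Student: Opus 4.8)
\medskip
\noindent\textbf{Proof plan.} The plan is to cast the parameter update \eqref{eq:1rdsa} into the standard stochastic-approximation form and then invoke the Kushner--Clark ODE method, mirroring the argument of Theorem 2 in \cite{prashanth2017rdsa}; the only substantive change is that the bias and martingale-noise estimates are now furnished by the deterministic-perturbation results, Lemmas \ref{lemma:MNid} and \ref{lemma:1rdsa-dp-bias}, rather than by a zero-mean random-perturbation argument.

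First I would decompose $\widehat\nabla f(x_n) = \nabla f(x_n) + \beta_n + M_{n+1}$, where $\beta_n := \E[\widehat\nabla f(x_n)\mid\F_n] - \nabla f(x_n)$ is the estimation bias and $M_{n+1} := \widehat\nabla f(x_n) - \E[\widehat\nabla f(x_n)\mid\F_n]$ is a martingale-difference sequence adapted to $\{\F_n\}$. By Lemma \ref{lemma:1rdsa-dp-bias}, $\|\beta_n\|$ is $O(\delta_{n3^N}^2)$ in the semi-lexicographic case and $O(\delta_{nN}^2)$ in the permutation case, whence $\beta_n \to 0$ a.s., since $\delta_n \to 0$ by (A4).

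Next I would bound the noise. Each summand $g_m = d_m (y_m^+ - y_m^-)/(2\delta_{\cdot})$ in \eqref{eq:1rdsa-grad}/\eqref{eq:1rdsa-grad-perm} involves only one of finitely many uniformly bounded deterministic vectors $d_m$, so using (A3) (uniform second-moment bounds on $f(x_n\pm\delta d_m)$ and on the measurement noise $\xi_m^\pm$) one gets $\E[\|M_{n+1}\|^2\mid\F_n] \le \kappa/\delta_{n3^N}^2$ (resp. $\kappa/\delta_{nN}^2$) for some finite $\kappa$. Hence $\sum_n a_n^2\,\E[\|M_{n+1}\|^2\mid\F_n] \le \kappa\sum_n (a_n/\delta_n)^2 < \infty$ by (A4), so $\sum_n a_n M_{n+1}$ converges a.s.\ by the martingale convergence theorem; equivalently the noise satisfies the Kushner--Clark condition that the tail sums $\sup_{k\ge n}\big\|\sum_{j=n}^{k} a_j M_{j+1}\big\|$ vanish a.s.\ as $n\to\infty$.

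Finally, (A1) ensures $\nabla f$ is continuous so that the ODE $\dot x_t = -\nabla f(x_t)$ is well posed, (A5) gives $\sup_n\|x_n\| < \infty$ a.s., and by hypothesis a compact $\mathcal D \subset D(x^*)$ is visited infinitely often. Assembling these, I would apply the Kushner--Clark convergence theorem (\cite{kushcla}, Chapter 5; see also \cite{bhatnagar-book}): the linearly interpolated iterates form an asymptotic pseudo-trajectory of the ODE, and since $x^*$ is asymptotically stable with domain of attraction $D(x^*)$ and the iterates return infinitely often to $\mathcal D$, the trajectory is eventually absorbed into $D(x^*)$, yielding $x_n\to x^*$ a.s. The only place real work is needed is the conditional second-moment bound on $M_{n+1}$ --- i.e.\ checking that replacing random perturbations by a finite, bounded deterministic family changes nothing essential in the noise analysis --- after which the argument is identical to the one already given in \cite{prashanth2017rdsa}.
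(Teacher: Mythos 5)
Your proposal is correct and follows essentially the same route as the paper: the paper's proof simply invokes the argument of Theorem 2 of \cite{prashanth2017rdsa} (the standard bias-plus-martingale-difference decomposition followed by the Kushner--Clark ODE method), with the bias handled by Lemma \ref{lemma:1rdsa-dp-bias}, which is exactly the structure you spell out. The details you supply --- the conditional second-moment bound on the noise via (A3) and the summability $\sum_n (a_n/\delta_n)^2 < \infty$ from (A4) --- are the same ingredients used in the cited proof, so there is nothing further to add.
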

\begin{proof}
	See Section \ref{sec:1rdsa-dp-proofs-strong-conv}.
\end{proof}

%

For the special case when the objective $f$ is strongly-convex, we present a non-asymptotic bound for 1RDSA-DP with permutation matrix-based perturbations. 
More precisely, we assume the objective function $f$ satisfies the following assumption:

\noindent (\textbf{A1'}) For any $x, x'$, we have 
\[(\nabla f(x) - \nabla f(x'))\tr (x - x') \ge \mu \l x - x' \r^2,\] for some $\mu >0$.

\begin{theorem}
\label{thm:exp-bound}
Under (A1') and (A2)-(A5),  we have,
 \begin{align}
\E  \l x_{n+1} - x^* \r
&  	\le \underbrace{\sqrt{2}\e(-\mu \Gamma_n) \l x_0 - x^* \r}_{\textbf{initial error}}\nonumber\\
		  &+  \left(\underbrace{3\sum\limits_{k=1}^{n}a^2_{k}\e(-2\mu(\Gamma_n - \Gamma_{k})) C_0^2 \delta_k^4}_{\textbf{bias error}}\right.
		  \left.+ \underbrace{2\sum\limits_{k=1}^{n}a^2_{k}
		  							\e(-2\mu(\Gamma_n - \Gamma_{k})) C_1 \delta_k^{-2}}_{\textbf{sampling error}} \right)^{\frac{1}{2}}
		  							,\label{eq:expectation-bound}
 \end{align} 
where $x^*$ is the global minimizer of $f$, $\Gamma_k:=\sum_{i=1}^k a_i$, $ C_0 $ is as defined in Lemma \ref{lemma:1rdsa-dp-bias}, and $ C_1 = \alpha_1 N/2$.
\end{theorem}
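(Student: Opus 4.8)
The plan is to set up a standard recursion for the error $e_n := x_n - x^*$ using the update \eqref{eq:1rdsa}, then bound the $L^2$ norm of $e_n$ by iterating a one-step contraction inequality, and finally resolve the inhomogeneous terms (bias and sampling noise) using the summation-by-parts / discrete Gronwall technique. First I would write $x_{n+1} - x^* = x_n - x^* - a_n \widehat\nabla f(x_n)$ and decompose the gradient estimate as $\widehat\nabla f(x_n) = \nabla f(x_n) + b_n + M_n$, where $b_n := \E[\widehat\nabla f(x_n)\mid\F_n] - \nabla f(x_n)$ is the conditional bias and $M_n := \widehat\nabla f(x_n) - \E[\widehat\nabla f(x_n)\mid\F_n]$ is a martingale-difference term. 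Lemma \ref{lemma:1rdsa-dp-bias}(ii) gives $\|b_n\| \le C_0 \delta_{nN}^2$ (I would absorb the reindexing $\delta_{nN}\to\delta_n$ into the notation, as the statement does), and assumption (A3) together with the explicit form of the permutation-based gradient estimate \eqref{eq:1rdsa-grad-perm} yields $\E[\|M_n\|^2\mid\F_n] \le C_1 \delta_n^{-2}$ with $C_1 = \alpha_1 N/2$ — this bound comes from the $\delta_n^{-2}$ factor in each $g_m$ and the second-moment bound on the function values.

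Next I would expand $\|e_{n+1}\|^2$. Using strong convexity (A1') in the form $(\nabla f(x_n) - \nabla f(x^*))\tr e_n \ge \mu\|e_n\|^2$ and $\nabla f(x^*) = 0$, the cross term $-2a_n e_n\tr \nabla f(x_n) \le -2\mu a_n \|e_n\|^2$, which after taking conditional expectations and using $M_n$ has mean zero gives, schematically,
\begin{align*}
\E[\|e_{n+1}\|^2\mid\F_n] \le (1 - 2\mu a_n)\|e_n\|^2 + a_n^2\bigl(\text{terms in } \|b_n\|^2, \|M_n\|^2, \text{ and } \|\nabla f(x_n)\|\bigr) + \text{cross terms}.
\end{align*}
The cross terms between $e_n$ and $b_n$, and the $\|\nabla f(x_n)\|^2$ contribution, need care: I would handle the $e_n$–$b_n$ cross term by Young's inequality, splitting it so part is absorbed into the $-2\mu a_n\|e_n\|^2$ term (possibly at the cost of replacing $2\mu$ by $\mu$ transiently, or by noting $a_n\to 0$ so that for large $n$ the contraction factor is controlled) and part goes into the $a_n^2\|b_n\|^2$ bucket; this is where the constant $3$ in front of the bias error and $2$ in front of the sampling error will come from. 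Then I would unroll the recursion from $0$ to $n$, using $1 - 2\mu a_k \le \e(-2\mu a_k)$ so that the product $\prod_{k=j+1}^{n}(1-2\mu a_k) \le \e(-2\mu(\Gamma_n - \Gamma_j))$, which produces the initial-error factor $\e(-2\mu\Gamma_n)\|e_0\|^2$ and the weighted sums $\sum_k a_k^2 \e(-2\mu(\Gamma_n-\Gamma_k))(\cdots)$. Taking square roots and applying $\sqrt{a+b}\le\sqrt{a}+\sqrt{b}$ (and $\E\|e_{n+1}\| \le \sqrt{\E\|e_{n+1}\|^2}$ by Jensen) gives \eqref{eq:expectation-bound}, with the $\sqrt 2$ on the initial term arising from one such split.

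The main obstacle I expect is controlling the $a_n^2\|\nabla f(x_n)\|^2$ term that appears when expanding $\|a_n\widehat\nabla f(x_n)\|^2$: strong convexity alone does not bound $\|\nabla f(x_n)\|$ without also invoking smoothness, but (A1) only gives a bound on third derivatives, not a global Lipschitz gradient. I would deal with this by using (A5) (boundedness of the iterates) to confine $x_n$ to a compact set on which $\nabla f$ is Lipschitz by (A1), so that $\|\nabla f(x_n)\| \le L\|e_n\|$ for some $L$, folding the resulting $a_n^2 L^2\|e_n\|^2$ term into the recursion — since $a_n\to 0$, for $n$ large this is dominated by the $2\mu a_n\|e_n\|^2$ contraction, and the finitely many early terms only affect constants. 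A secondary subtlety is the clean bookkeeping of which pieces land in the "bias" versus "sampling" sums so that the stated constants ($3$, $2$, $C_1 = \alpha_1 N/2$) come out exactly; I would track the Young's-inequality parameters explicitly to match these. Everything else — the martingale cancellation, the exponential bound on the product, the square-root splitting — is routine.
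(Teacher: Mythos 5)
Your overall architecture (error recursion, contraction via strong convexity, unrolling with $\e(-2\mu(\Gamma_n-\Gamma_k))$ weights, Jensen and $\sqrt{a+b}\le\sqrt a+\sqrt b$ at the end) is in the right spirit, but the route through a one-step expansion of $\l e_{n+1}\r^2$ has two gaps that you flag yourself and then do not actually close, and both prevent you from obtaining the stated inequality. First, the term $a_n^2\l\nabla f(x_n)\r^2$: the theorem's constants are explicit ($\sqrt 2$, $3$, $2$, $C_0$, $C_1$) and involve no Lipschitz constant and no ``large $n$'' threshold, whereas your fix (local Lipschitzness on a compact set from (A5), absorption into the contraction for $n$ large, early terms ``only affect constants'') necessarily leaves a residual factor depending on $L$ and on $\sum_k a_k^2$ (or on the finitely many early iterates), which does not appear in \eqref{eq:expectation-bound}. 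Second, the bias cross term $-2a_n e_n\tr b_n$: any one-step Young split either sacrifices part of the contraction (so the exponent degrades from $2\mu$ to $(2-\epsilon)\mu$, while the theorem keeps the full $-2\mu(\Gamma_n-\Gamma_k)$ in both the bias and sampling sums and $-\mu\Gamma_n$ on the initial error) or produces an $a_k\delta_k^4$-weighted bias sum instead of the stated $a_k^2\delta_k^4$ weighting. You cannot have both the $a_k^2$ weights and the undamaged exponent from a one-step squared-norm recursion, so ``tracking the Young parameters'' will not make the constants $3$ and $2$ come out.

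The paper avoids both problems with a device you did not use: since $\nabla f(x^*)=0$, write $\nabla f(x_n)=J_n z_n$ exactly, with $J_n=\int_0^1\nabla^2 f(x^*+\lambda(x_n-x^*))\,d\lambda$ and $z_n=x_n-x^*$, so the update becomes the \emph{linear} inhomogeneous recursion $z_{n+1}=(I-a_nJ_n)z_n+a_n(C_0\delta_n^2+\eta_n)$, where $\eta_n$ is the martingale noise. Strong convexity gives $J_n\succeq\mu\I_N$, hence $\l I-a_kJ_k\r\le 1-a_k\mu\le\e(-\mu a_k)$, and the products telescope to $\e(-\mu(\Gamma_n-\Gamma_k))$ with no smoothness or Lipschitz bound ever needed; there is no separate $\l\nabla f(x_n)\r^2$ term and no one-step cross term with the bias. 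Only after unrolling does one square, drop the noise cross term by $\E[\eta_k\mid\F_k]=0$, and bound the two remaining cross terms by Cauchy--Schwarz in the form $\langle a,b\rangle\le\max(\l a\r^2,\l b\r^2)$, which is exactly where the coefficients $2$, $3$, $2$ (and hence the $\sqrt2$ on the initial error) come from; the noise moment bound $\E\l\eta_k\r^2\le N\alpha_1/(2\delta_k^2)=C_1\delta_k^{-2}$ you already have. If you restructure your argument around this exact linearization, the rest of your plan goes through essentially as you wrote it.
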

\begin{proof}
 See Section \ref{sec:1rdsa-dp-proofs-exp-bound}.
\end{proof}
The initial error depends on the starting point $x_0$ of the algorithm. The sampling error relates to a martingale difference sequence, which arises due to the fact that only noisy measurements of the objective function are available. The bias error arises out of the need to estimate gradients from function measurements and quantifies the error in gradient estimation. The initial and sampling error components are common to classic stochastic convex optimization settings, while the bias error is specific to the simulation optimization framework, i.e., a setting where gradients are not directly available and have to be estimated from noisy function measurements.

Now we specialize the result above by choosing the step-size $a_n$ and perturbation constant $\delta_n$ to obtain an order $O\left(\frac{1}{\sqrt{n}}\right)$ bound in expectation on the optimization error of the algorithm.
\begin{theorem}
\label{thm:1rdsa-dp-rate}
Let $a_k = c/k$ and $\delta_k = \delta_0/k^{\delta}$. Then under (A1'), (A2) and (A3),
\begin{align*}
\E \l x_n - x^* \r &\le  \dfrac{\sqrt{2}\l x_0 - x^*\r}{n^{\mu c}} + \dfrac{\sqrt{3} c C_0  \delta_0^2}{\sqrt{2\mu c - 4\delta-1}} n^{-\frac{(1+4\delta)}{2}} +  \dfrac{\sqrt{3 C_1} c }{\delta_0 \sqrt{2\mu c + 2 \delta -1}} n^{\delta-\frac{1}{2}}.
\end{align*}
\end{theorem}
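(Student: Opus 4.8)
The plan is to obtain this purely as a corollary of Theorem~\ref{thm:exp-bound}: substitute the schedules $a_k = c/k$ and $\delta_k = \delta_0/k^{\delta}$ into the bound \eqref{eq:expectation-bound} and estimate the initial, bias, and sampling contributions separately, each of which collapses to an elementary power-of-$n$ estimate once the harmonic sums $\Gamma_n$ and $\Gamma_n-\Gamma_k$ are controlled.

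First I would record the two harmonic-sum facts that drive everything. Since $\Gamma_n = c\sum_{i=1}^{n} 1/i \ge c\ln(n+1) \ge c\ln n$, we get $\e(-\mu\Gamma_n)\le n^{-\mu c}$, which turns the initial-error term of \eqref{eq:expectation-bound} into $\sqrt 2\,\l x_0 - x^*\r n^{-\mu c}$. Likewise $\Gamma_n-\Gamma_k = c\sum_{i=k+1}^{n} 1/i \ge c\ln(n/k)$, up to an $O(1/k)$ additive correction that only contributes a bounded multiplicative constant which we absorb; hence $\e(-2\mu(\Gamma_n-\Gamma_k))\le (k/n)^{2\mu c}$.

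Next, for the bias term, plugging $a_k^2 = c^2/k^2$, $\delta_k^4 = \delta_0^4/k^{4\delta}$ and the discount bound into $3\sum_{k=1}^{n} a_k^2\e(-2\mu(\Gamma_n-\Gamma_k))C_0^2\delta_k^4$ yields, up to constants, $3C_0^2 c^2\delta_0^4\, n^{-2\mu c}\sum_{k=1}^{n} k^{2\mu c-2-4\delta}$. Using $\sum_{k=1}^{n} k^{p} = O(n^{p+1}/(p+1))$ for $p>-1$ — which here forces $2\mu c - 1 - 4\delta > 0$ — the bias term is at most $3C_0^2 c^2\delta_0^4\, n^{-(1+4\delta)}/(2\mu c-1-4\delta)$, and its square root is the second term of the claim. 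The sampling term is handled identically: $2\sum_{k=1}^{n} a_k^2\e(-2\mu(\Gamma_n-\Gamma_k))C_1\delta_k^{-2}$ reduces to $2C_1 c^2\delta_0^{-2} n^{-2\mu c}\sum_{k=1}^{n} k^{2\mu c-2+2\delta}$, which under $2\mu c - 1 + 2\delta > 0$ is at most $2C_1 c^2\delta_0^{-2} n^{2\delta-1}/(2\mu c + 2\delta - 1) \le 3C_1 c^2\delta_0^{-2} n^{2\delta-1}/(2\mu c + 2\delta - 1)$, whose square root is the third term. Finally, since \eqref{eq:expectation-bound} keeps the bias and sampling contributions under a single square root, I would split them via $\sqrt{a+b}\le\sqrt a+\sqrt b$ and add the initial-error term, producing exactly the stated three-term bound.

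There is no deep obstacle here — the statement is essentially a corollary — so the only real care is in the bookkeeping: (i) the estimate $\Gamma_n-\Gamma_k \ge c\ln(n/k)$ holds only up to a bounded factor, since the naive inequality $\sum_{i=k+1}^{n}1/i \ge \ln(n/k)$ fails for small $k$, so one must either absorb a constant such as $\e(2\mu c)$ or restrict to $n$ beyond a fixed $n_0$; and (ii) one must verify that the requirements $2\mu c > 4\delta + 1$ and $2\mu c > 1 - 2\delta$, under which the two partial-sum bounds are valid, are precisely the conditions making the denominators $\sqrt{2\mu c - 4\delta - 1}$ and $\sqrt{2\mu c + 2\delta - 1}$ in the statement real and strictly positive, so that the bound is non-vacuous exactly in the claimed regime.
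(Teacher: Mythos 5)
Your proposal is correct and follows essentially the same route as the paper's own proof: substitute the schedules into the bound of Theorem~\ref{thm:exp-bound}, use $\Gamma_n \ge c\ln n$ together with the discount estimate $e^{-2\mu(\Gamma_n-\Gamma_k)} \le (k/n)^{2\mu c}$, bound the resulting power sums $\sum_{k\le n} k^{2\mu c-2-4\delta}$ and $\sum_{k\le n} k^{2\mu c-2+2\delta}$ by $n^{p+1}/(p+1)$, and split the single square root via $\sqrt{a+b}\le\sqrt{a}+\sqrt{b}$. If anything you are slightly more careful than the paper, which uses the discount bound without the bounded multiplicative correction you flag and whose displayed sampling-error bound carries a typo in the denominator ($2\mu c-4\delta-1$ instead of the $2\mu c+2\delta-1$ you correctly obtain).
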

\begin{proof}
 See Section \ref{sec:1rdsa-dp-proofs-dp-rate}.
\end{proof}
Choosing $\delta=0$, one can recover the optimal rate of the order $O\left(n^{-1/2}\right)$ for simultaneous perturbation schemes. 
Further, choosing $c$ such that $\mu c > 1/2$, it is easy to observe that the initial error is forgotten faster than the other error components. 
In contrast, for the more general case of non-convex objective $f$, the authors in \cite{spall1992multivariate,chin1997comparative} are able to establish a rate of  $O\left(n^{-1/3}\right)$ obtained from an asymptotic mean square error analysis using the second moment of the limiting normal distribution. More recently, for the case of convex (and not necessarily strongly-convex) objective $f$, an error of the order $O\left(n^{-1/3}\right)$ is unavoidable from an information-theoretic (or minimax) viewpoint -- see \cite{HuPrGySz16} for further details.

\section{Second-order RDSA with deterministic perturbations (2RDSA-DP)}
\label{sec:2rdsa-dp}
Second-order methods provide many advantages over their first-order counterparts. The main benefit of second-order methods over first-order methods is that they converge at the optimum rate without requiring knowledge of minimum eigenvalue of  $ \nabla^2 f(x^*)$. Other benefits include (i) faster convergence in the final phase, i.e., when the iterate is close to the optima as second-order methods minimize a quadratic model of $f$ and (ii) scale-invariance, i.e., second-order methods adjust automatically to the scale of the parameter and hence, the update rule is unaffected. On the flip side, second-order schemes require estimating the Hessian in addition to the gradient of $f$ and have a higher per-iteration cost due to matrix inversion. 

Using the two deterministic sequences, i.e., semi-lexicographic and permutation matrix-based choices, presented in the previous section, we provide two variants of the 2RDSA algorithm proposed in \cite{prashanth2017rdsa}. 

\subsection{Semi-lexicographic sequence-based perturbations}

The reason such deterministic choices for perturbations work in the context of 2RDSA can be seen as follows: 
Using suitable Taylor's series expansions (see Lemma \ref{lemma:2rdsa-dp-bias} below), we have
\begin{align}
\E&[\widehat H_n \mid \F_n] =   \frac1{2\times3^N}  \sum\limits_{m=0}^{3^N-1} \left( M_m \sum\limits_{i=1}^{N} (d_m^i)^2 \nabla^2_{ii} f(x_n) \right.
\left.+ 2\sum\limits_{i=1}^{N-1}\sum\limits_{j=i+1}^N d_m^i d_m^j \nabla^2_{ij} f(x_n) + O(\delta_n^2)\right). \label{eq:h1m}
\end{align}

From Lemma \ref{lemma:MNid}, it can be seen that
\[\frac1{2\times3^N}\sum\limits_{m=0}^{3^N-1} \sum\limits_{i=1}^{N} (d_m^i)^2 = 1,\text{ and }
\sum\limits_{m=0}^{3^N-1} \sum\limits_{i=1}^{N-1}\sum\limits_{j=i+1}^N d_m^i d_m^j = 0.\]
The above equality can be directly verified for the case when $N=2$ and $N=3$ using Table \ref{fig:det-perturb}. 

Plugging the fact above into \eqref{eq:h1m} followed by a tedious calculation (see Lemma \ref{lemma:2rdsa-dp-bias} below), we obtain
\begin{align*}
\E[\widehat H_n(i,j) \mid \F_n] = \nabla^2_{ij} f(x_n) + O(\delta_{n3^N}^2).
\end{align*}

\begin{remark}\textit{\textbf{(Jacobi variant)}}
	If the Hessian is known to be in a diagonal form, i.e., if the requirement is for an algorithm to estimate $\nabla^2_{ii} f(\cdot)$, then the estimate of Algorithm \ref{alg:2rdsa-dp} can be replaced by the following:
	\[\widehat H_n = \frac1{2\times3^N}  \sum\limits_{m=0}^{3^N-1} \tilde H_m,\]
	with the inner-loop Hessian estimate given by
	\[ \tilde H_m = \left[ \dfrac{y_m^+ + y_m^- - 2 y_n}{\delta_{n3^N + m}^2}\right]. \]
	Notice that, unlike Algorithm \ref{alg:2rdsa-dp}, the scheme above (the so-called Jacobi variant of stochastic Newton algorithms - cf. \cite{bhatnagar2005}) can estimate the diagonal entries of the Hessian, and, more importantly, cannot estimate the off-diagonal entries of the Hessian, as the off-diagonal perturbation terms of interest zero out over the inner loop, i.e., $\sum\limits_{m=0}^{3^N-1} \sum\limits_{i=1}^{N-1}\sum\limits_{j=i+1}^N d_m^i d_m^j =0$. 
\end{remark}

	\begin{algorithm}
		\begin{algorithmic}
			\State {\bf Input:}  initial parameter $x_0 \in \R^N$, perturbation constants $\delta_n>0$,   step-sizes $\{a_n, b_n\}$, matrix projection operator $ \Upsilon $.
			The deterministic perturbation $\{d_m\}$ sequence is chosen in the same manner as in 1RDSA-DP.
			\For{$n = 0,1,2,\ldots$}
			\State Obtain function value $y_n = f(x_n) + \xi$, where $\xi$ is the measure noise.
			\State{}
			\Comment{As in 1RDSA, fix $x_n$ and loop through the rows of matrix $ D_N $ for perturbations $d_m$.}
			\For{$m = 0,1,2,\ldots,3^N-1$}	
			\State Obtain function values $y_m^+ = f(x_n + \delta_{n 3^{N} + m} d_m) + \xi$ and $y_m^- = f(x_n - \delta_{n 3^{N} + m} d_m) + \xi$, where $\xi$ is the measure noise.
			\vspace{-3ex}
			
			\State \begin{align}
				\indent	&\text{Set } \quad g_m = d_m \left[ \dfrac{y_m^+ - y_m^-}{2\delta_{n3^N + m}}\right],\\
				&\text{Set }\quad		
				\tilde H_m = M_m \left[ \dfrac{y_m^+ + y_m^- - 2 y_n}{\delta_{n3^N + m}^2}\right], \textrm{ where }\\[1ex]
				&M_m \!=\!
				\left[\!\!
				\begin{array}{ccc}
					\kappa\left((d_m^1)^2\!-\!2\times 3^{N}\right) & \cdots & d_m^1 d_m^N\\
					d_m^2 d_m^1  &  \cdots & d_m^2 d_m^N\\
					\vdots & \vdots & \vdots\\
					d_m^N d_m^1 & \cdots &  \kappa\left((d_m^N)^2\!-\!2\times 3^{N}\right) \\
				\end{array}
				\!\!\right],\nonumber\\
				&\textrm{and }\kappa = \left(\dfrac{1}{2\times 3^{N-1}} - 1\right)^{-1}.\nonumber
			\end{align}
			\EndFor
			\begin{align}
				\text{Gradient estimate:}& \quad		\widehat\nabla f(x_n) = \frac1{2\times3^N}  \sum\limits_{m=0}^{3^N-1} g_m.	\label{eq:1rdsa-grad-in-2rdsa} \\
				\text{Hessian estimate:}& \quad				\widehat H_n = \frac1{(2\times3^N)^2}  \sum\limits_{m=0}^{3^N-1} \tilde H_m.\label{eq:2rdsa-estimate-dp}\\
				\text{Hessian update:}& \quad		\overline H_n = \; (1-b_{n})  \overline H_{n-1} + b_{n} \widehat H_n. \label{eq:2rdsa-H}\\
				\text{Parameter update:}& \quad		x_{n+1} = \; x_n - a_n \Upsilon(\overline H_n)^{-1}\widehat\nabla f(x_n). \label{eq:2rdsa-x} \end{align}
			\EndFor
			\State {\bf Return} $x_n$.
		\end{algorithmic}
		\caption{2RDSA-Lex-DP}
		\label{alg:2rdsa-dp}
	\end{algorithm}

\subsection{Permutation matrix-based perturbations}
In the case of permutation matrix-based deterministic perturbations, it is not possible to estimate the off-diagonal entries of the Hessian. 
This is because $\sum\limits_{m=0}^{N-1} d_m^i d_m^j =0,\ i\ne j$ for permutation matrix-based perturbations. While a similar property holds for semi-lexicographic perturbations as well, we could add correction factors through the $M_m$ matrix (see Algorithm \ref{alg:2rdsa-dp}) to produce an estimate for all the entries in the Hessian matrix. A similar correction factor is not feasible for the case of permutation matrix-based perturbations, because each column of a permutation matrix contains only one positive ($=1$) entry, while the rest are zero. In other words, in \eqref{eq:h1m}, the second term inside the brackets always sums to zero when the outside summation for $m$ goes up to $N-1$, irrespective of the choice for $M_m$. 

However, using a permutation matrix, it is possible to estimate the diagonal entries of the Hessian. In this case, 
 the overall algorithm follows the template provided in Algorithm \ref{alg:2rdsa-dp}, except that the perturbations are generated using a permutation matrix in $N$-dimensions, the inner loop for $m$ would run from $0$ to $N-1$, and the gradient/Hessian estimates in Algorithm \ref{alg:2rdsa-dp} are replaced by
 		\begin{align}
		\label{eq:1rdsa-grad-perm-2}
		\text{Gradient estimate:}& \quad	\widehat\nabla f(x_n) =  \sum\limits_{m=0}^{N-1} g_m,\\
\text{Hessian estimate:}& \quad				\widehat H_n = \sum\limits_{m=0}^{N-1} \tilde H_m,\label{eq:2rdsa-estimate-perm-dp}
\end{align}
where 
\[ \tilde H_m = \left[ \dfrac{y_m^+ + y_m^- - 2 y_n}{\delta_{n N + m}^2}\right]. \]

\subsection*{Alternative using two permutation matrices}
Let $D_N$ and $\hat D_N$ be two $N$-dimensional permutation matrices that are not identical.  
Let $y_m^+ = f(x_n + \delta_n d_m + \delta_n \hat d_m) + \xi$ and $y_m^- = f(x_n - \delta_n d_m - \delta_n \hat d_m) + \xi$ be function measurements, where $ \xi $ is the measure noise, $d_m$ and $\hat d_m$ are sourced from $D_N$ and $\hat D_N$, respectively. In other words, $d_m$ and $\hat d_m$ would loop through the rows of $D_N$ and $\hat D_N$, respectively.
Consider the following estimate for the Hessian, in place of \eqref{eq:2rdsa-estimate-dp}:
\begin{align}
	\text{Hessian estimate:}& \quad				\widehat H_n = \sum\limits_{m=0}^{N-1} \tilde H_m,\label{eq:2rdsa-estimate-perm-dp-2}
\end{align}
where 
\[ \tilde H_m = \left[ \dfrac{y_m^+ + y_m^- - 2 y_n}{\delta_{n N + m}^2}\right]. \]

\subsection{Main results}

The analysis of 2RDSA-DP is under assumptions that match those employed for studying the convergence behavior of regular second-order SPSA and RDSA algorithms (i.e., with random perturbations), and we list them below for the sake of completeness.

\begin{enumerate}[label=(\textbf{C\arabic*})]
\item  The function
$f$ is four-times differentiable\footnote{Here $\nabla^4 f(x) = \dfrac{\partial^4 f (x)}{\partial x\tr \partial x\tr \partial x\tr \partial x\tr}$ denotes the fourth derivative of $f$ at $x$ and $\nabla^4_{i_1 i_2 i_3 i_4} f(x)$ denotes the $(i_1 i_2 i_3 i_4)$th entry of $\nabla^4 f(x)$, for $i_1, i_2, i_3,i_4=1,\ldots, N$.} with $\left|\nabla^4_{i_1 i_2 i_3 i_4} f(x_n) \right| < \infty$, for $i_1, i_2, i_3,i_4=1,\ldots, N$ and for all $n$. 

\item For each $n$ and all $x$, there exists a $\rho>0$ not dependent on $n$ and $x$, such that $(x-x^*)\tr \bar f_n(x) \ge \rho \left\| x_n - x\right\|$, where $\bar f_n(x) = \Upsilon(\overline H_n)^{-1} \nabla f(x)$.

\item $\{\xi_m, \xi_m^+,\xi_m^-, m = 0,\ldots,P, n=1,2,\ldots\}$ satisfy $\E\left[\left. \xi_m^+ + \xi_m^- - 2 \xi_m \right| \F_n\right] = 0$, where $P=3^{N}-1$ for semi-lexicographic 2RDSA-DP and $P=N-1$ for permutation matrix-based 2RDSA-DP. 

\item Same as (A4).

\item For each $i=1,\ldots,N$ and any $\rho>0$, 
$P(\{ \bar f_{ni} (x_n) \ge 0 \text{ i.o}\} \cap \{ \bar f_{ni} (x_n) < 0 \text{ i.o}\} \mid \{ |x_{ni} - x^*_i| \ge \rho\quad \forall n\}) =0.$

\item The operator $\Upsilon$ satisfies $\delta_n^2 \Upsilon(H_n)^{-1} \rightarrow 0$ a.s. and  $E(\left\| \Upsilon(H_n)^{-1}\right\|^{2+\zeta}) \le \rho$ for some $\zeta, \rho>0$.

\item For any $\varsigma >0$ and nonempty $S \subseteq \{1,\ldots,N\}$, there exists a $\rho'(\varsigma,S)>\varsigma$ such that 
$$ \limsup_{n\rightarrow \infty} \left| \dfrac{\sum_{i \notin S} (x-x^*)_i \bar f_{ni}(x)}{\sum_{i \in S} (x-x^*)_i \bar f_{ni}(x)}               \right| < 1 \text{ a.s.}$$
for all $|(x-x^*)_i| < \varsigma$ when $i \notin S$ and $|(x-x^*)_i| \ge \rho'(\varsigma,S)$ when $i\in S$.
\item For some $\alpha_0, \alpha_1>0$ and for all $m$, $ n $, $\E |{\xi_m}|^{2} \le \alpha_0$, $\E{|\xi_m^{\pm}|^{2}} \le \alpha_0$, $\E |f(x_n)|^{2}\le \alpha_1$ and $\E |f(x_n \pm \delta d_m)|^{2} \le \alpha_1$, for any $ \delta > 0 $. 
\item  $\sum_n \frac{1}{(n+1)^{2}\delta_n^{4}} < \infty$.
\end{enumerate}
\textit{Comments on assumptions (C1)-(C9)}:
(C1) and (C2) are basic assumptions about the smoothness and steepness of the function $ f $. (C1) holds if $ f $ is twice continuously differentiable with a bounded second derivative on $ \R^N $ and (C2) ensures the function $ f $ has enough curvature. 
(C3) and (C4) are common martingale-difference noise and step-sizes conditions and can be motivated in a similar manner as in the case of 1RDSA-DP (see Section II-C). 
(C5) says that if $ x_n $ is uniformly bounded away from  $ x^* $, then $ x_n $  cannot be bouncing around causing the change in signs of the normalized gradient elements an infinite number of times. 
(C6) can be ensured by having $\Upsilon (A)$ defined as performing an eigen-decomposition of $ A $ followed by projecting the eigenvalues to the positive side by adding a large enough scalar. 
(C7) ensures that, after sufficiently large iterations, each element of $ \bar{f}_n(x) $ tends to make a non-negligible contribution to products of the form $ (x-x^*)^T\bar{f}_n(x)  $ (see C2). 
(C5) and (C7) are not necessary if the iterates are bounded, i.e., $\sup _ { n } \left\| x _ { n } \right\| < \infty$ a.s. 
Finally, (C8) and (C9) are necessary to ensure convergence of the Hessian recursion, in particular, to invoke a martingale convergence result (see Theorem 7 and the proof of \cite[Theorem 2a]{spall2000adaptive}).
For a more detailed interpretation of the above conditions, the reader is referred to Section III and Appendix B of \cite{spall2000adaptive}. 

The main claim that establishes the asymptotic unbiasedness of the Hessian estimate in the DP variants of 2RDSA that we propose is given below.
\begin{lemma}(\textbf{\textit{Asymptotic unbiasedness of 2RDSA-DP Hessian estimate}})
\label{lemma:2rdsa-dp-bias}
Under (C1)-(C9), 
\begin{enumerate}[label=(\textbf{\roman*})]
\item for $\widehat H_n$ defined according to \eqref{eq:2rdsa-estimate-dp}, we have a.s. that\footnote{Here $\widehat H_n(i,j)$ and $\nabla^2_{ij}f(\cdot)$ denote the $(i,j)$th entry in the Hessian estimate $\widehat H_n$ and the true Hessian $\nabla^2 f(\cdot)$, respectively.}, for $i,j = 1,\ldots,N$,
\begin{align}
\left|\E\left[
\left. \widehat H_n(i,j) \right| \F_n \right] - \nabla^2_{ij} f(x_n)\right| = O(\delta_{n3^N}^2).
\end{align} 

\item for $\widehat H_n$ defined according to \eqref{eq:2rdsa-estimate-perm-dp}, we have a.s. that
\begin{align}
\left|\E\left[
\left. \widehat H_n(i,i) \right| \F_n \right] - \nabla^2_{ii} f(x_n)\right| = O(\delta_{nN}^2),
\end{align} 
for $i=1,\ldots,N$.
\end{enumerate}
\end{lemma}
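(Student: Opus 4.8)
The plan is to establish the Hessian asymptotic-unbiasedness claim by mimicking the structure already sketched for the gradient estimate in Lemma \ref{lemma:1rdsa-dp-bias}, but carried one Taylor order further so that the second-order information is exposed. I would start from the inner-loop quantity $\tilde H_m = M_m\left[(y_m^+ + y_m^- - 2 y_n)/\delta^2\right]$ (with $\delta = \delta_{n3^N+m}$), substitute the noisy measurements $y_m^\pm = f(x_n \pm \delta d_m) + \xi_m^\pm$, $y_n = f(x_n) + \xi_m$, and note that by (C3) the conditional expectation of the noise combination $\xi_m^+ + \xi_m^- - 2\xi_m$ given $\F_n$ vanishes, so the noise drops out in conditional expectation. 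A fourth-order Taylor expansion of $f(x_n \pm \delta d_m)$ around $x_n$ then gives
\begin{align*}
f(x_n + \delta d_m) + f(x_n - \delta d_m) - 2 f(x_n)
= \delta^2\, d_m\tr \nabla^2 f(x_n) d_m + O(\delta^4),
\end{align*}
where the odd-order terms cancel by the two-sided construction and the $O(\delta^4)$ remainder is controlled by (C1) (bounded fourth derivative) and (A5)/(C8) boundedness of the iterates. Dividing by $\delta^2$, the inner-loop estimate becomes $\tilde H_m = M_m\, (d_m\tr \nabla^2 f(x_n) d_m) + O(\delta^2)$, i.e. $M_m$ scaled by the scalar $\sum_{i}(d_m^i)^2 \nabla^2_{ii} f(x_n) + 2\sum_{i<j} d_m^i d_m^j \nabla^2_{ij} f(x_n)$, which is exactly the expression displayed in \eqref{eq:h1m}.

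The core of the argument is then the combinatorial identity that the matrix $M_m$ — with diagonal entries $\kappa\big((d_m^i)^2 - 2\times 3^N\big)$ and off-diagonal entries $d_m^i d_m^j$ — when multiplied by that scalar and averaged over the full inner loop $m = 0,\ldots,3^N-1$ (with the normalization $1/(2\times3^N)^2$), reproduces $\nabla^2 f(x_n)$ entrywise up to $O(\delta_{n3^N}^2)$. For the diagonal entry $(i,i)$ I would expand $\E[\widehat H_n(i,i)\mid\F_n]$ as a sum over $m$, $i'$, $j'$ of $M_m(i,i')\,$-type products weighted by $\nabla^2_{i'j'}f(x_n)$, and verify that: (a) the coefficient of $\nabla^2_{ii}f(x_n)$ equals $1$, using $\frac{1}{2\times3^N}\sum_m (d_m^i)^2 = 1$ (a coordinatewise refinement of Lemma \ref{lemma:MNid}) together with the definition of $\kappa$ to cancel the $-2\times 3^N$ correction against the cross-contribution of the other diagonal terms $(d_m^{i'})^2$, $i'\neq i$; (b) the coefficients of $\nabla^2_{i'i'}f(x_n)$ for $i'\neq i$ vanish — this is precisely where $\kappa = (1/(2\times 3^{N-1}) - 1)^{-1}$ is forced; and (c) the coefficients of the off-diagonal $\nabla^2_{i'j'}f(x_n)$, $i'<j'$, vanish because $\sum_m d_m^{i'} d_m^{j'} = 0$ (again from Lemma \ref{lemma:MNid}, noting $(D^{i'}_N)\tr D^{j'}_N = 0$) and because $\sum_m (d_m^i)^2 d_m^{i'} d_m^{j'} = 0$ by the lexicographic structure. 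For a generic off-diagonal entry $(i,j)$, $i\neq j$, the analysis is similar: the weight on $\nabla^2_{ij}f(x_n)$ comes from $\frac{1}{(2\times3^N)^2}\sum_m (d_m^i d_m^j)^2$, which must equal $1$, and all other Hessian entries must receive zero net weight. Part (ii), the permutation-matrix Jacobi variant, is a strictly simpler special case: here $M_m$ is effectively the identity action on diagonal entries, $\tilde H_m = (y_m^+ + y_m^- - 2y_n)/\delta^2$ is a scalar reused as needed, each $d_m$ is a standard basis vector $e_{\pi(m)}$, so $d_m\tr \nabla^2 f(x_n) d_m = \nabla^2_{\pi(m)\pi(m)} f(x_n)$, and summing over the $N$ rows of the permutation matrix — which hit each coordinate exactly once — yields $\sum_{i} \nabla^2_{ii}f(x_n)$ placed appropriately; picking out the $(i,i)$ entry gives $\nabla^2_{ii}f(x_n) + O(\delta_{nN}^2)$ directly, with no correction factor needed.

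The main obstacle I anticipate is part (i), specifically the bookkeeping in step (b)–(c) above: verifying that the $\kappa$-weighted diagonal corrections in $M_m$ exactly annihilate the spurious contributions of $\nabla^2_{i'i'}f(x_n)$ for $i'\neq i$ while leaving the coefficient of $\nabla^2_{ii}f(x_n)$ equal to $1$. This requires knowing not just $\frac{1}{2\times3^N}\sum_m d_m d_m\tr = \I_N$ but the finer third- and fourth-moment sums $\frac{1}{2\times3^N}\sum_m (d_m^i)^2 (d_m^{i'})^2$ and $\frac{1}{2\times3^N}\sum_m (d_m^i)^2 d_m^{i'} d_m^{j'}$ over the semi-lexicographic sequence, which I would establish by induction on $N$ using the block-recursion \eqref{eq:somelabel01} (each column being a threefold concatenation of a lower-dimensional column), checking the base cases $N=2,3$ against Table \ref{fig:det-perturb}. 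Once these moment identities are in hand, the value $\kappa = (1/(2\times3^{N-1}) - 1)^{-1}$ falls out of a single linear equation, and the rest is substitution. The $O(\delta^2)$ remainder tracking is routine given (C1) and the boundedness assumptions, and the noise cancellation is immediate from (C3), so those parts I would treat briefly.
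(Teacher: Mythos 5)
Your proposal takes essentially the same route as the paper's proof: a fourth-order Taylor expansion of the symmetric difference, cancellation of the noise via (C3), and a reduction of both the diagonal and off-diagonal coefficients to second- and fourth-moment identities of the semi-lexicographic sequence, namely $\sum_m (d_m^i)^2 = 2\times 3^N$, $\sum_m (d_m^i)^4 = 2\times 3^{N+1}$, $\sum_m (d_m^i)^2 (d_m^j)^2 = (2\times 3^N)^2$ for $i\neq j$, and the vanishing of the odd-type sums, all established by induction on the block recursion exactly as in the paper, with the permutation-matrix case handled as the simpler special case. One minor bookkeeping correction: in the actual computation the cross-diagonal coefficients $\frac{\kappa}{(2\times 3^N)^2}\sum_m \bigl((d_m^i)^2 - 2\times 3^N\bigr)(d_m^{i'})^2$ vanish for every choice of $\kappa$, and it is the own-diagonal normalization $\kappa\bigl(\tfrac{1}{2\times 3^{N-1}} - 1\bigr) = 1$ that forces the stated value of $\kappa$ (the reverse of your step (b)), but since you verify all coefficients anyway this does not affect the validity of the argument.
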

\begin{proof}
See Section \ref{sec:2rdsa-dp-proofs-bias}.
\end{proof}

Once we have the asymptotic unbiasedness  for the 2RDSA-DP Hessian estimate, the convergence of the Hessian recursion is immediate and is given below for the sake of completeness.
\begin{theorem}(\textbf{Strong Convergence})
	\label{thm:2rdsa-dp-H}
Assume (C1)-(C9). Then $ x_n \rightarrow x^* $ a.s. as $n\rightarrow \infty$, where $ x_n $ is given by \eqref{eq:2rdsa-x}. For 2RDSA-Lex-DP $\overline H_n \rightarrow \nabla^2 f(x^*)$ a.s. as $n\rightarrow \infty$, furthermore, if the true Hessian is diagonal, then even for 2RDSA-Perm-DP $\overline H_n \rightarrow \nabla^2 f(x^*)$ a.s. as $n\rightarrow \infty$, where $\overline H_n$ is governed by \eqref{eq:2rdsa-H}. 
\end{theorem}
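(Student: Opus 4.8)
The plan is to treat the Hessian recursion \eqref{eq:2rdsa-H} and the parameter recursion \eqref{eq:2rdsa-x} as a coupled stochastic approximation scheme and to follow, essentially verbatim, the convergence analysis of adaptive Newton SPSA in \cite{spall2000adaptive} (Theorem~2a) and of \cite{bhatnagar-prashanth2015}. The only genuinely new inputs are the asymptotic-unbiasedness statements of Lemmas~\ref{lemma:1rdsa-dp-bias} and \ref{lemma:2rdsa-dp-bias}, which here play the role that the corresponding bias lemmas for random perturbations play there; everything else is a matter of checking that (C1)--(C9) supply exactly the hypotheses those arguments require.

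First I would analyze the Hessian recursion. Writing $\widehat H_n = \E[\widehat H_n \mid \F_n] + \big(\widehat H_n - \E[\widehat H_n \mid \F_n]\big)$ and invoking Lemma~\ref{lemma:2rdsa-dp-bias} together with (C4) (which forces $\delta_n \to 0$), the conditional-mean part equals $\nabla^2 f(x_n) + \beta_n$ with $\l \beta_n \r \to 0$ a.s.: for 2RDSA-Lex-DP this is the full Hessian of $f$ at $x_n$, while for 2RDSA-Perm-DP only the diagonal of $\nabla^2 f(x_n)$ is recovered. The martingale-difference part $\widehat H_n - \E[\widehat H_n \mid \F_n]$ is controlled using (C8), which bounds the second moments of the noise terms $\xi_m, \xi_m^\pm$ and of the function values entering $\tilde H_m$, and (C9), which compensates the $\delta_n^{-2}$ amplification built into $\tilde H_m$; together these let me apply the martingale convergence theorem used in \cite{spall2000adaptive} to conclude that $\sum_n b_n \big(\widehat H_n - \E[\widehat H_n \mid \F_n]\big)$ converges a.s. Hence \eqref{eq:2rdsa-H} is an averaging iteration tracking $\nabla^2 f(x_n)$ (resp. its diagonal), so $\overline H_n - \nabla^2 f(x_n) \to 0$ a.s. for 2RDSA-Lex-DP and $\overline H_n - \mathrm{diag}(\nabla^2 f(x_n)) \to 0$ a.s. for 2RDSA-Perm-DP.

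Next I would establish $x_n \to x^*$ from \eqref{eq:2rdsa-x}. The crucial point is that (C6) forces $\Upsilon(\overline H_n)$ to be uniformly positive definite with $\E \l \Upsilon(\overline H_n)^{-1} \r^{2+\zeta}$ bounded and $\delta_n^2 \Upsilon(\overline H_n)^{-1} \to 0$ a.s., so the direction $\bar f_n(x_n) = \Upsilon(\overline H_n)^{-1}\nabla f(x_n)$ is well behaved regardless of whether $\overline H_n$ has yet converged, and there is no circularity with the first step. Decomposing $\widehat\nabla f(x_n)$ into $\nabla f(x_n)$, an $O(\delta_n^2)$ bias term (Lemma~\ref{lemma:1rdsa-dp-bias}), and a martingale difference, the recursion \eqref{eq:2rdsa-x} becomes a Robbins--Monro scheme with mean field $-\bar f_n(x_n)$; (C4) (= (A4)) kills the bias and noise contributions in the standard way. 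Convergence to $x^*$ then follows from the steepness condition (C2), $(x-x^*)\tr \bar f_n(x) \ge \rho \l x_n - x \r$, via the Lyapunov function $\l x_n - x^* \r^2$, using (C5) and (C7) to rule out persistent sign oscillation of the coordinates when the iterates are not assumed bounded (and, as noted after the assumptions, (C5) and (C7) are superfluous when $\sup_n \l x_n \r < \infty$ a.s., in which case a standard ODE argument suffices). Combining the two steps: once $x_n \to x^*$ a.s., continuity of $\nabla^2 f$ from (C1) gives $\nabla^2 f(x_n) \to \nabla^2 f(x^*)$, whence $\overline H_n \to \nabla^2 f(x^*)$ a.s. for 2RDSA-Lex-DP; for 2RDSA-Perm-DP we only obtain $\overline H_n \to \mathrm{diag}(\nabla^2 f(x^*))$, which equals $\nabla^2 f(x^*)$ exactly when the true Hessian is diagonal, the stated hypothesis.

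The main obstacle I anticipate is the martingale-difference control in the Hessian recursion: since $\tilde H_m$ divides the noisy measurement combination by $\delta_n^2$, the per-step $L^2$ fluctuation of $\widehat H_n$ is $O(\delta_n^{-2})$, and one must verify that the weighted sum of squares of the increments is finite — this is precisely what the summability of $(n+1)^{-2}\delta_n^{-4}$ in (C9), paired with the $b_n$-weighting, is designed to ensure. Closely related is the need to argue that the averaging iterate $\overline H_n$ genuinely tracks the slowly varying target $\nabla^2 f(x_n)$ rather than lagging a moving target, which is where the implicit two-timescale separation between $\{b_n\}$ and $\{a_n\}$ enters (though, once $x_n \to x^*$ is in hand, $\nabla^2 f(x_n)$ is a convergent sequence and ordinary averaging with $\sum_n b_n = \infty$, $b_n \to 0$ already yields the limit). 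All remaining ingredients — boundedness of $\Upsilon(\overline H_n)^{-1}$, vanishing of the $O(\delta_n^2)$ bias terms, and the Lyapunov estimate for \eqref{eq:2rdsa-x} — are routine given (C1)--(C9) and Lemmas~\ref{lemma:1rdsa-dp-bias}--\ref{lemma:2rdsa-dp-bias}.
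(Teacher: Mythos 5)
Your proposal is correct and follows essentially the same route as the paper: the paper's proof is a one-line deferral to Theorems 5 and 6 of \cite{prashanth2017rdsa} (which themselves adapt the adaptive-SPSA analysis of \cite{spall2000adaptive}), with the only genuinely new input being the asymptotic-unbiasedness Lemma \ref{lemma:2rdsa-dp-bias} (and Lemma \ref{lemma:1rdsa-dp-bias} for the gradient), exactly as you describe. Your explicit treatment of the martingale term via (C8)--(C9), the non-circularity through (C6), and the diagonal-Hessian caveat for 2RDSA-Perm-DP is precisely what that cited machinery carries out.
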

\begin{proof}
	See Section \ref{sec:2rdsa-dp-proofs-H}.
\end{proof}

We next present a convergence rate result for the special case of a quadratic objective function under the following additional assumptions, which have been used to establish a rate result for a variant of 2SPSA in \cite{spall-jacobian}:
\begin{enumerate}[label=(\textbf{C\arabic*}),resume]
\item  $f$ is quadratic and $\nabla^2 f(x^*) > 0$. 
\item The operator $\Upsilon$ is chosen such that $\E\parallel \Upsilon(\overline H_n) - \overline H_n\parallel^2 = o(e^{-2b_0n^{1-r}/(1-r)})$ and $\parallel \Upsilon(H) - H \parallel^2 / \left(1+\parallel H \parallel^2\right)$ is uniformly bounded.
\end{enumerate}
%
The assumptions (C10) and (C11) are much stronger compared to (C1) and (C6), respectively.
In a noise-free setting, after suitable Taylor series expansions, the Hessian estimate of 2SPSA can be written as 
\[ \widehat H_n(2SPSA) = \nabla^2 H(x_n) + \Psi(H(x_n)), \]
where $H(x_n)$ is the Hessian at iterate $x_n$ and $\Psi((H(x_n))$ is a function that involves the Hessian at $x_n$ and random perturbations used in 2SPSA. More importantly, $\Psi(H(x_n))$ is zero-mean. 
In \cite{spall-jacobian}, since the true Hessian $H(x_n)$ is not known in practice, in place of $\widehat H_n$ in the Hessian update in \eqref{eq:2rdsa-H}, the author uses the following improved Hessian estimate: $\widehat H_n(2SPSA) - \Psi(\overline H_n)$. The rationale underlying this replacement is that subtracting the $\Psi(\overline H_n)$ term reduces the error in Hessian estimation. 
In \cite{reddy2016improved}, the authors use a similar trick to reduce the Hessian estimation error in regular 2RDSA.
We claim such a feedback term is not necessary in the context of 2RDSA-DP and this can be argued as follows:
From the proof passage leading to \eqref{eq:2rdsa-l12}, it is easy to infer the following after ignoring the noise terms:
\begin{align*}
\widehat H_n(i,j)  & =  \nabla^2_{ij} f(x_n) + O(\delta_{n3^N}^2).
\end{align*}
The principal difference with feedback variants of 2SPSA/2RDSA is that the equation above implies that there are no zero-mean terms involving the perturbations that one needs to worry about in case of 2RDSA-DP. 

To substantiate the claim that the feedback terms are not necessary for 2RDSA-DP, we provide a rate result that parallels a corresponding result for 2SPSA with feedback \cite{spall-jacobian}. The proof given later is also much simpler than the corresponding version for 2SPSA, due to the fact that there are no extra terms involving perturbations to handle.
\begin{theorem}
\label{thm:2rdsa-dp-rate}
Assume (C9), (C10) and (C11), and also that the setting is noise-free. 
Let $b_n = b_0/n^r$, $n=1,2,\ldots,k$, where $1/2 < r < 1$ and $0 < b_0 \leq 1$. Let $H^*=\nabla^2 f(x)$ for any $x$, and $\Lambda_k = \overline H_k - H^*$. Then, we have 
\begin{align}
\textrm{trace}[\E (\Lambda_n \tr \Lambda_n)] = O(e^{-2b_0n^{1-r} / (1-r)}).
\label{eq:quad-bigo}
\end{align}
\end{theorem}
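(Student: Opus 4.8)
The plan is to exploit the fact that, in the noise-free quadratic setting, the inner-loop Hessian estimate is \emph{exactly} unbiased, not merely asymptotically so. Specifically, when $f$ is quadratic, all Taylor expansions in the proof of Lemma \ref{lemma:2rdsa-dp-bias} terminate: the $O(\delta_{n3^N}^2)$ error terms come from third- and fourth-order derivatives, which vanish. Hence $\widehat H_n = H^*$ deterministically for every $n$ (using $\frac1{2\times 3^N}\sum_m \sum_i (d_m^i)^2 = 1$, $\sum_m \sum_{i<j} d_m^i d_m^j = 0$, and the correction via $M_m$ and $\kappa$ exactly as in Algorithm \ref{alg:2rdsa-dp}; the key identities are Lemma \ref{lemma:MNid}). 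So first I would write out $\widehat H_n \equiv H^*$ and substitute into the Hessian recursion \eqref{eq:2rdsa-H}.

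Next, substituting $\widehat H_n = H^*$ into \eqref{eq:2rdsa-H} and subtracting $H^*$ from both sides gives a clean linear recursion for $\Lambda_n = \overline H_n - H^*$:
\begin{align*}
\Lambda_n = (1-b_n)\,\Lambda_{n-1} + b_n(\widehat H_n - H^*) = (1-b_n)\,\Lambda_{n-1},
\end{align*}
so that $\Lambda_n = \Lambda_{n-1}\prod$-style telescoping yields $\Lambda_n = \left(\prod_{k=1}^n (1-b_k)\right)\Lambda_0$. Then the step-size choice $b_k = b_0/k^r$ with $1/2<r<1$, $0<b_0\le 1$ gives the standard estimate $\prod_{k=1}^n(1-b_k) \le \exp\left(-\sum_{k=1}^n b_k\right) = \exp\left(-b_0\sum_{k=1}^n k^{-r}\right)$, and comparing the sum to the integral $\int_1^n t^{-r}\,dt = \frac{n^{1-r}-1}{1-r}$ produces the bound $\prod_{k=1}^n(1-b_k) = O(e^{-b_0 n^{1-r}/(1-r)})$. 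Squaring and taking the trace of the expectation of $\Lambda_n\tr\Lambda_n$ then gives the claimed $O(e^{-2b_0 n^{1-r}/(1-r)})$.

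The only place where something nontrivial happens is the projection operator $\Upsilon$: the parameter update \eqref{eq:2rdsa-x} uses $\Upsilon(\overline H_n)^{-1}$, not $\overline H_n^{-1}$, and in principle one should check that $\Upsilon$ does not re-introduce error. But that is precisely what assumption (C11) handles: $\E\|\Upsilon(\overline H_n) - \overline H_n\|^2 = o(e^{-2b_0 n^{1-r}/(1-r)})$, so any discrepancy between $\Upsilon(\overline H_n)$ and $\overline H_n$ decays at least as fast as the bound we are proving. Thus if the statement is interpreted (as the notation $\Lambda_k = \overline H_k - H^*$ suggests) as a bound on the raw recursion $\overline H_k$, the $\Upsilon$ issue is irrelevant to \eqref{eq:quad-bigo} itself and only matters for the downstream convergence of $x_n$; if instead one wants the analogous bound on $\Upsilon(\overline H_n) - H^*$, one adds and subtracts $\overline H_n$, applies the triangle inequality in $L^2$, and absorbs the $\Upsilon$ term using (C11).

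I expect the main obstacle to be essentially bookkeeping rather than mathematics: one must verify carefully that the quadratic assumption really does kill \emph{all} the remainder terms in the Hessian-estimate expansion — including the cross terms weighted by the off-diagonal products $d_m^i d_m^j$ and the diagonal correction terms involving $\kappa$ — so that $\widehat H_n = H^*$ holds with no residual. This requires re-reading the computation behind \eqref{eq:h1m} and \eqref{eq:2rdsa-l12} and confirming that the surviving (non-remainder) terms assemble exactly to $\nabla^2 f(x_n) = H^*$ with the specific normalizations $\frac1{(2\times 3^N)^2}$ and $M_m$ used in Algorithm \ref{alg:2rdsa-dp}. Once that exactness is in hand, the rest is the routine $\prod(1-b_k) \le e^{-\sum b_k}$ argument with an integral comparison, and (C9) plus (C11) are invoked only to tidy up the statement.
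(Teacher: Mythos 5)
Your proposal is correct and follows essentially the same route as the paper: exactness of the Hessian estimate in the noise-free quadratic case ($\widehat H_n = H^*$), the resulting homogeneous recursion $\Lambda_n = (1-b_n)\Lambda_{n-1}$, unrolling to $\prod_{k=1}^n(1-b_k)\Lambda_0$, and an integral comparison for $\sum_k b_k$ to get the exponential rate (the paper writes $1-b_k = e^{-b_k}(1-O(b_k^2))$ where you use $1-b_k \le e^{-b_k}$, a negligible difference). Your side remark that (C11) plays no real role in bounding $\overline H_n - H^*$ itself is also consistent with the paper's proof, which never invokes it.
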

\begin{proof}
See Section \ref{sec:2rdsa-dp-proofs-rate}.
\end{proof}

\section{Convergence proofs}
\label{sec:proofs}
\subsection{Proofs for 1RDSA variants with deterministic perturbations}
\label{sec:1rdsa-dp-proofs}

\subsubsection{Proof of Lemma \ref{lemma:MNid}}  \label{sec:1rdsa-dp-proofs-MNid}
\begin{proof}\ \\[0.5ex]
\textbf{Case 1: Semi-lexicographic sequence-based perturbations}

We prove the claim by induction. Consider first the case of $N=1$. Now $D^{1}_{1} = \left[ 
\begin{array}{c} 
-1 \\ -1\\2
\end{array}
\right]$.
Note that $M_1 = (D^{1}_{1})\tr D^{1}_{1} = 6$.
Thus the result holds for $N=1$.

Suppose that the claim above holds for $N=k$, i.e., $M_k = 2\times 3^k \I_k$. Consider the case of $N=k+1$. We make the following observations:
\begin{enumerate}
	\item The size of each column $D^{i}_{k+1}, i=1,\ldots,k+1$ is $3^{k+1}$.
	\item The columns $D^{2}_{k+1},\ldots,D^{k+1}_{k+1}$ are obtained from $D^{1}_{k},\ldots,D^{k}_{k}$, respectively by concatenating the $D^{i}_{k}$ columns thrice, i.e., 
	\begin{align}
		D^{i}_{k+1} = \left[ 
		\begin{array}{c} 
			D^{i-1}_{k} \\ D^{i-1}_{k}\\D^{i-1}_{k}
		\end{array}
		\right], \quad i=2,\ldots,k+1. \label{eq:somelabel}
	\end{align}
	\item In the first column, i.e., $D^{1}_{k+1}$, the first $2\times 3^k$ elements are $-1$ and the remaining $3^k$ elements are $2$.
	\item 
	The off-diagonal terms in $M_{k+1}$ are all zero. This is argued as follows:
	For $i\ne j$, $i,j \in \{2,\ldots,k+1\}$, we have that $(D^{i}_{k+1})\tr D^{j}_{k+1} = 0$. This follows from \eqref{eq:somelabel} because $(D^{i}_{k+1})\tr D^{j}_{k+1} = 3 (D^{i-1}_{k})\tr D^{j-1}_{k} = 0$ (by induction hypothesis).
	
	For off-diagonal terms of the type $(D^{1}_{k+1})\tr D^{i}_{k+1}$, where $i\in \{2,\ldots,k+1\}$, we first re-write $D^{1}_{k+1}$ and $D^{i}_{k+1}$ as follows:
	\begin{align}
		D^{1}_{k+1} = \left[ 
		\begin{array}{c} 
			\bm{-1} \\ \bm{-1}\\\bm{2}
		\end{array}
		\right],\label{eq:somelabel1}
	\end{align}
	where $\bm{-1}$ and $\bm{2}$ are $k\times 1$ vectors with each entry $-1$ and $2$, respectively.
	Thus, from \eqref{eq:somelabel} and the foregoing, 
	\begin{align*}
		(D^{1}_{k+1})\tr D^{i}_{k+1} \!=\! - \bm{1}\tr D^{i-1}_{k} \!-\! \bm{1}\tr D^{i-1}_{k} \!+\! \bm{2}\tr D^{i-1}_{k}
		= 0.
	\end{align*}
	
	\item Finally, the diagonal terms of $M_{k+1}$ are $2\times 3^{k+1}$. This can be seen as follows: For $i=1,\ldots,k+1$,
	\begin{align*}
		(D^{i}_{k+1})\tr D^{i}_{k+1} &= \frac{2}{3} \times (-1)^2\times 3^{k+1} + \frac{1}{3} \times (2)^2\times 3^{k+1} \\
		&= 2\times 3^{k+1}.
	\end{align*}
	In the above, we have used the fact that $D^{i}_{k+1}$ is a $3^{k+1}$-dimensional vector with two-third entries $-1$ and the remaining one-third entries as $2$. 
\end{enumerate}
Thus, $M_{k+1} = 2\times 3^{k+1} \I_{k+1}$ and the claim follows for semi-lexicographic perturbations.

\textbf{Case 2: Permutation matrix-based perturbations}

Let $D_N$, the columns $D^{i}_{N}$, be the $N$-dimensional permutation matrix. Recall that 1RDSA-Perm-DP loops through the rows in $D_N$ and the columns $D^{i}_{N}$ in $D_N$ are in $N$-dimensions. 
It is well-known that $D_N^{-1} = D_N\tr$ for a permutation matrix $D_N$, cf. Proposition 2.7.21 of \cite{goodaire2013linear} for a detailed proof. Hence, the claim  that $M_N = \I_N$ follows for the case of permutation matrix based deterministic perturbation sequences.
\end{proof}

\subsubsection{Proof of Lemma \ref{lemma:1rdsa-dp-bias}} \label{sec:1rdsa-dp-proofs-bias}
\begin{proof}
We prove the claim for semi-lexicographic perturbations; the claim for the case of permutation matrix-based perturbations follows by an analogous argument.


By Taylor's series expansions, we obtain, a.s.,
\begin{align*}
	f(x_n \pm \delta_n d_m) & = f(x_n) \pm \delta_n d_m^T \nabla f(x_n) + \frac{\delta_n^2}{2} d_m^T \nabla^2 f(x_n) d_m 
	\pm \frac{\delta_n^3}{6} \nabla^3 f(\tilde{x}_n^+) (d_m \otimes d_m \otimes d_m)
\end{align*}
where $  \otimes $ denotes the Kronecker product and $ \tilde{x}_n^+ $ (respectively, $ \tilde{x}_n^- $) are on the line segment between $ x_n $ and $ (x_n + \delta_n d_m) $ (respectively, $ (x_n - \delta_n d_m) $). Then
\begin{align}
	&\E\left[\left.\widehat\nabla f(x_n)\right| \F_n \right]\nonumber\\
	&= \frac1{2\!\times\!3^N}  \sum\limits_{m=0}^{3^N-1} \E\left[\left.g_m\right| \F_n \right]\nonumber\\
	&=\frac1{2\!\times\!3^N}  \sum\limits_{m=0}^{3^N-1} \E\left[\left.\dfrac{d_m}{2\delta_{(n3^N+m)}} \times \right.\right.
	\left.\left.\left(f(x_n+\delta_{(n 3^{N} + m)} d_m) - f(x_n-\delta_{(n 3^{N} + m)} d_m)\right)\right| \F_n \right] \nonumber\\
	&= \frac1{2\!\times\!3^N}  \sum\limits_{m=0}^{3^N-1}   \left[d_m d_m\tr \nabla f(x_n) 
	 +  \frac{\delta_{(n3^N+m)}^2}{12} d_m(\nabla^3 f(\tilde{x}_n^+) + \nabla^3 f(\tilde{x}_n^-)) (d_m \otimes d_m \otimes d_m) \right]  \nonumber\\
	&=\nabla f(x_n)  + C_0 \delta_{n 3^{N}}^2.\label{eq:lex-unbaised}
\end{align}
The first term on the RHS of (\ref{eq:lex-unbaised}) follows from Lemma \ref{lemma:MNid}. Now, the $ l $th coordinate of the second term in the RHS of (\ref{eq:lex-unbaised}) can be upper-bounded as follows:
\begin{align*}
& \sum\limits_{m=0}^{3^N-1} 	\frac{\delta_{(n3^N+m)}^2}{12} d_m(\nabla^3 f(\tilde{x}_n^+) + \nabla^3 f(\tilde{x}_n^-)) (d_m \otimes d_m \otimes d_m) \\
& \leq  \frac{\alpha_0 \delta_{n3^N}^2}{6}  \sum\limits_{m=0}^{3^N-1} \sum_{i_1=1}^{N} \sum_{i_2=1}^{N} \sum_{i_3=1}^{N}  (d_m^l d_m^{i_1} d_m^{i_2} d_m^{i_3}) \\
& \leq \frac{\alpha_0 \delta_{n3^N}^2 N^3 (2\times3^N)^2}{6}
\end{align*}
The first inequality above follows from (A1), while the second inequality follows from the fact that  \\$\sum\limits_{m=0}^{3^N-1}  (d_m^l d_m^{i_1} d_m^{i_2} d_m^{i_3})$ is non-zero only if either $l=i_2$ and $i_1=i_3$ or vice-versa and in this case, we have
\[ \sum\limits_{m=0}^{3^N-1}  (d_m^l)^2 (d_m^{i_1})^2  = (2\times 3^N)^2.\]
The equality above can be easily inferred using induction arguments similar to proof of Lemma \ref{lemma:MNid} and we omit the details.
\end{proof}

\subsubsection{Proof of Theorem \ref{thm:1rdsa-strong-conv}} \label{sec:1rdsa-dp-proofs-strong-conv}
\begin{proof}
Follows in exactly the same manner as the proof of Theorem 2 of \cite{prashanth2017rdsa}, given the asymptotic unbiasedness result in Lemma \ref{lemma:1rdsa-dp-bias}.
\end{proof}

\subsubsection{Proof of Theorem \ref{thm:exp-bound}} \label{sec:1rdsa-dp-proofs-exp-bound}
\begin{proof}
Fixing $\delta_n$ for the inner loop and using arguments from the proof of Lemma \ref{lemma:1rdsa-dp-bias}, we obtain
\[ \widehat \nabla f(x_n) = \nabla f(x_n) + C_0 \delta_n^2 + \sum_{m=0}^{N-1} d_m \left(\frac{\xi_m^+ - \xi_m^-}{2\delta_n}\right). 
\]

Letting $\eta_n = \sum\limits_{m=0}^{N-1} d_m \left(\dfrac{\xi_m^+ - \xi_m^-}{2\delta_n}\right)$, the update in \eqref{eq:1rdsa} is equivalent to
\begin{align}
\label{eq:sgd-equiv}
 x_{n+1} = x_n - a_n \left[ \nabla f(x_n) + C_0 \delta_n^2 + \eta_n\right].
\end{align}
Since $\nabla f(x^*) = 0$, we have the following from the fundamental theorem of calculus: 
\[ \left(\int_0^1 \nabla^2 f(x^* + \lambda (x_n - x^*)) d\lambda \right)z_n = \nabla f(x_n).\]
Here $z_n \triangleq x_n - x^*$ denotes the optimization error at instant $n$ of 1RDSA-DP. 
Then, using \eqref{eq:sgd-equiv}, we have the following recursive update form for $z_n$:
\begin{align}
z_{n+1} = & (I-a_n J_n)z_n + a_n\left(C_0 \delta_n^2 + \eta_n\right)\nonumber\\
=& \tpi_n z_0 + \sum_{k=1}^{n}a_k\tpi_n\tpi_k^{-1}(C_0 \delta_k^2 + \eta_k), \label{eq:s33}
\end{align}
where $J_n := \int_0^1 \nabla^2 f(x^* + \lambda (x_n - x^*)) d\lambda$
and $\tpi_n:=\prod_{k=1}^{n}\left(I - a_k J_k\right)$.
A similar unrolling of a general stochastic approximation recursion can be found in \cite{frikha2012concentration}. However, our setting involves biased gradient estimates and the non-asymptotic bounds require a careful handling of the perturbation constant $\delta_n$, so that the overall convergence rate is of the order $O\left(1/\sqrt{n}\right)$. Moreover, we make all the constants explicit in the final bound.

Now, for the square of the error $\l z_{n+1} \r^2$, we use \eqref{eq:s33}
and Jensen's inequality to obtain
\begin{align}
(\E&\l z_{n+1}\r)^2
\le \E (\langle z_n, z_n \rangle )\nonumber\\
=&  \E \bigg[ \l \tpi_n z_0 \r^2 
				+ \l\sum_{k=1}^{n}a_k \tpi_n\tpi_k^{-1}C_0 \delta_k^2\r^2\
+ \l\sum_{k=1}^{n}a_k\tpi_n\tpi_k^{-1}\eta_k\r^2 + \left\langle \tpi_n z_0,
					\sum_{k=1}^{n}a_k \tpi_n\tpi_k^{-1}C_0\delta_k^2 \right\rangle \nonumber\\
				&+
	\left
				\langle \tpi_n z_0,
						\sum_{k=1}^{n}a_k \tpi_n\tpi_k^{-1}\eta_k \right\rangle 
+ \left\langle  \sum_{k=1}^{n}a_k \tpi_n\tpi_k^{-1}C_0\delta_k^2,
						\sum_{k=1}^{n}a_k \tpi_n\tpi_k^{-1}\eta_k \right\rangle
																						\bigg] \nonumber\\
&	\le  2\l \tpi_n z_0 \r^2
				+ 3\sum_{k=1}^{n}a_k^2 \l\tpi_n\tpi_k^{-1}\r^2 C_0^2 \delta_k^4
				+ 2\sum_{k=1}^{n}a_k^2 \l\tpi_n\tpi_k^{-1}\r^2\E\l\eta_k\r^2.
\label{eq:exp-inter}
\end{align}
For the last inequality, we have used the fact that $\eta_k$ is zero-mean (see (A2)) in order to ignore a cross term. For the other two cross terms, we have used Cauchy-Schwarz to conclude $\langle a, b \rangle \le \max( \l a\r^2, \l b \r^2)$ and hence, the first and last square terms can appear at most twice, while the second square term can appear at most thrice.

The second moment of the noise factor $\eta_n$ can be bounded as follows: 
\begin{align}
 \E \l \eta_n \r^2 &\le \sum\limits_{m=0}^{N-1} \l d_m \r^2 \E \left(\dfrac{\xi_m^+ - \xi_m^-}{2\delta_n}\right)^2
\le \frac{ N \alpha_1}{2 \delta_n^2}, \label{eq:noise-bd}
\end{align}
where we have used the fact that for permutation matrix-based perturbation $\l d_m \r = 1$ and $\E (\xi_m^+ - \xi_m^-)^2 \le 2 \alpha_1$ from assumption (A3).

The term $\ml \tpi_n \tpi_k^{-1}\mr$ is bounded as follows:
\begin{align}
\ml \tpi_n \tpi_k^{-1}\mr =& \ml\prod_{j=k+1}^{n}\left(I - a_j J_j\right)\mr\nonumber\\
   =&\prod_{j=k+1}^{n}\ml (1-a_j\mu)I - a_j(J_j- \mu I) \mr\nonumber\\
		\le&\prod_{j=k+1}^{n}\ml (1-a_j\mu)I\mr \le \prod_{j=k+1}^{n} (1-a_j\mu)\nonumber\\
		\le&\e\left(-\mu(\Gamma_n - \Gamma_k)\right).\label{eq:t12}
\end{align}
The second inequality above follows by observing that $\l I - a_n J_n \r  \le \e(-\mu a_n)$, since $\nabla^2 f(x) - \mu I$ is positive semi-definite owing to strong-convexity of $f$ (see (A1')).

The main claim now follows by plugging in the bounds in \eqref{eq:noise-bd} and \eqref{eq:t12} into \eqref{eq:exp-inter}.
\end{proof}

\subsection{Proof of Theorem \ref{thm:1rdsa-dp-rate}} \label{sec:1rdsa-dp-proofs-dp-rate}
\begin{proof}
We bound each of the error terms on the RHS of \eqref{eq:expectation-bound} separately.
For bounding the initial error, we use the following inequality:
\[\e(-\mu\Gamma_n) \le \e(-\mu c \ln n) \le n^{-\mu c}.\]
In arriving at the bound above, we have compared a sum with an integral.

Substituting $a_k = c/k$ and $\delta_k = \delta_0/k^{\delta}$ into the bias error term in \eqref{eq:expectation-bound}, we obtain
\begin{align*}
\sum\limits_{k=1}^{n}a^2_{k}
		  							\e(-2\mu(\Gamma_n - \Gamma_{k})) C_0^2 \delta_k^4 
		  							&\le 
\sum_{k=1}^n \frac{c^2}{k^2} n^{-2\mu c} k^{2 \mu c} C_0^2 \dfrac{\delta_0^4}{k^{4\delta}} \\
&\le  c^2 n^{-2 \mu c} C_0^2 \delta_0^4 \sum_{k=1}^n k^{2 \mu c - 4\delta -2} \\
&\le  \dfrac{c^2  C_0^2 \delta_0^4}{(2\mu c - 4\delta - 1)} n^{-1-4\delta}.
\end{align*}

Along similar lines, the sampling error term in \eqref{eq:expectation-bound} can be upper-bounded as follows:
\begin{align*}
\sum\limits_{k=1}^{n}a^2_{k}
		  							\e(-2\mu(\Gamma_n - \Gamma_{k})) \dfrac{C_1}{\delta_k^2} 
&\le  \dfrac{c^2  C_1}{\delta_0^2 (2\mu c - 4\delta - 1)} n^{-1+2\delta}.
\end{align*}
The claim follows by combining the bounds derived above on each of the error terms on the RHS of \eqref{eq:expectation-bound}.
\end{proof}

\subsection{Proofs for 2RDSA variants with deterministic perturbations}
\label{sec:2rdsa-dp-proofs}

\subsubsection{Proof of Lemma \ref{lemma:2rdsa-dp-bias}} \label{sec:2rdsa-dp-proofs-bias}

\begin{proof}\ \\
\textbf{Case 1: Semi-lexicographic sequence-based perturbations}

As in the proof of Lemma 4 in \cite{prashanth2017rdsa}, we employ Taylor's series expansions of $f(\cdot)$ at $x_n \pm \delta_n d_n$ to obtain
\begin{align*}
&\dfrac{f(x_n+\delta_{(n3^N + m)} d_m) + f(x_n-\delta_{(n3^N + m)} d_m) - 2 f(x_n)}{\delta_{(n3^N + m)}^2}\\
 =& d_m\tr \nabla^2 f(x_n) d_m +  O(\delta_{(n3^N + m)}^2)\\
= & \sum\limits_{i=1}^N\sum\limits_{j=1}^N d_m^i d_m^j \nabla^2_{ij} f(x_n) + O(\delta_{(n3^N + m)}^2)\\
=& \sum\limits_{i=1}^N (d_m^i)^2 \nabla^2_{ii} f(x_n) + 2\sum\limits_{i=1}^{N-1}\sum\limits_{j=i+1}^N d_m^i d_m^j \nabla^2_{ij} f(x_n)
 + O(\delta_{(n3^N + m)}^2).\stepcounter{equation}\tag{\theequation}\label{eq:2rdsa-taylor}
\end{align*}
Recall that the Hessian estimate is given by
\begin{align*}
\widehat{H_n} &=  \dfrac{1}{(2\times 3^N)^2} \sum\limits_{m=0}^{3^N-1} M_m \left(\dfrac{y_m^+ + y_m^- - 2 y_m}{\delta_n^2}\right), \textrm{ where }
\end{align*}
$M_m$ is as defined in Algorithm \ref{alg:2rdsa-dp}.
For the sake of simplicity, we ignore the zero-mean noise term  
$\xi_m^+ + \xi_m^- - 2\xi_m$ temporarily and analyze the following product inside the Hessian estimate:
\begin{align}
&\dfrac{1}{(2\times 3^N)^2} \sum\limits_{m=0}^{3^N-1} M_m \left(\sum\limits_{i=1}^{N} (d_m^i)^2 \nabla^2_{ii} f(x_n) + 2\sum\limits_{i=1}^{N-1}\sum\limits_{j=i+1}^N d_m^i d_m^j \nabla^2_{ij} f(x_n) )\right). \label{eq:h1}
\end{align}

\subsubsection{Off-diagonal terms in \eqref{eq:h1}}

We now consider the $(k,l)$th term in \eqref{eq:h1}: Assume without loss of generality, that $k<l$. Then,
\begin{align}
& \dfrac{1}{(2\times 3^N)^2} \sum\limits_{m=0}^{3^N-1}\left[d_m^k d_m^l   \left(\sum\limits_{i=1}^N (d_m^i)^2 \nabla^2_{ii} f(x_n) \right.\right. 
\left.\left.+ 2\sum\limits_{i=1}^{N-1}\sum\limits_{j=i+1}^N d_m^i d_m^j \nabla^2_{ij} f(x_n)\right) \right]\nonumber\\
=& \dfrac{1}{(2\times 3^N)^2} \sum\limits_{i=1}^N \nabla^2_{ii} f(x_n) \sum\limits_{m=0}^{3^N-1}  \left(d_m^k d_m^l (d_m^i)^2 \right) 
+ \dfrac{1}{(2\times 3^N)^2}\sum\limits_{i=1}^{N-1}\sum\limits_{j=i+1}^N  \nabla^2_{ij} f(x_n) \sum\limits_{m=0}^{3^N-1}  \left(d_m^k d_m^l d_m^i d_m^j\right)  \label{eq:crossh}\\
= & \nabla^2_{kl} f(x_n).\label{eq:offidag}
\end{align}
The last equality follows from the fact that the first term in \eqref{eq:crossh} is $0$ since $k\ne l$ and 
\[\sum\limits_{m=0}^{3^N-1}  \left(d_m^k d_m^l (d_m^i)^2 \right)=0 \textrm{ for any } i,\]
while the second term in \eqref{eq:crossh} can be seen to be equal to $\nabla^2_{kl} f(x_n)$ 
using induction arguments similar to proof of Lemma \ref{lemma:1rdsa-dp-bias}.
 
\subsubsection{Diagonal terms in \eqref{eq:h1}}

Consider the $lth$ diagonal term inside the conditional expectation in \eqref{eq:h1}:
\begin{align}
& \dfrac{\kappa}{(2\times 3^N)^2} \sum\limits_{m=0}^{3^N-1}  \left((d_m^l)^2-(2\times 3^N)\right)  \times \left(\sum\limits_{i=1}^N (d_m^i)^2 \nabla^2_{ii} f(x_n)
 + 2\sum\limits_{i=1}^{N-1}\sum\limits_{j=i+1}^N d_m^i d_m^j \nabla^2_{ij} f(x_n)\right)\nonumber\\
=& \dfrac{\kappa}{(2\times 3^N)^2} \sum\limits_{m=0}^{3^N-1} (d_m^l)^2 \sum\limits_{i=1}^N (d_m^i)^2 \nabla^2_{ii} f(x_n) 
 + \dfrac{2\kappa}{(2\times 3^N)^2} \sum\limits_{m=0}^{3^N-1} (d_m^l)^2\sum\limits_{i=1}^{N-1}\sum\limits_{j=i+1}^N d_m^i d_m^j \nabla^2_{ij} f(x_n)\nonumber\\
& - \dfrac{\kappa}{(2\times 3^N)} \sum\limits_{m=0}^{3^N-1} \sum\limits_{i=1}^N (d_m^i)^2 \nabla^2_{ii} f(x_n) 
- \dfrac{2\kappa}{(2\times 3^N)} \sum\limits_{m=0}^{3^N-1}\sum\limits_{i=1}^{N-1}\sum\limits_{j=i+1}^N d_m^i d_m^j \nabla^2_{ij} f(x_n).\label{eq:h2}
\end{align}
Now, we analyze each of the four terms on the RHS above.

The first term on the RHS of \eqref{eq:h2} can be simplified as follows:
\begin{align*}
&\dfrac{\kappa}{(2\times 3^N)^2} \sum\limits_{m=0}^{3^N-1} (d_m^l)^2 \sum\limits_{i=1}^N (d_m^i)^2 \nabla^2_{ii} f(x_n)\\
= & \dfrac{\kappa}{(2\times 3^N)^2} \left(\sum\limits_{m=0}^{3^N-1}(d_m^l)^4 \nabla^2_{ll} f(x_n) \right.
\left. + \sum\limits_{m=0}^{3^N-1}\sum\limits_{i=1,i\ne l}^N (d_m^l)^2(d_m^i)^2 \nabla^2_{ii} f(x_n) \right)\\
= & \dfrac{\kappa}{(2\times 3^N)^2}  \Bigg( (2\times 3^{N+1}) \nabla^2_{ll} f(x_n)
+ (2\times 3^N)^2\sum\limits_{i=1,i\ne l}^N  \nabla^2_{ii} f(x_n) \Bigg).
\end{align*} 

For the second equality above, we have used the fact that $\sum\limits_{m=0}^{3^N-1}(d_m^l)^4 = 2\times 3^{N+1}$ (easy to infer this claim along the lines of point (5) in the proof of Lemma \ref{lemma:MNid}) and $\sum\limits_{m=0}^{3^N-1} (d_m^l)^2(d_m^i)^2  = (2\times 3^N)^2$, $\forall l \ne i$.

The second term in \eqref{eq:h2} is zero because
$\sum\limits_{m=0}^{3^N-1}  \left(d_m^i d_m^j (d_m^l)^2 \right)=0$ for any $l$ and $i\ne j$ -- a claim that can be easily proved using an induction argument.

The third term in \eqref{eq:h2} without the negative sign can be simplified as follows: 
\begin{align*}
\dfrac{\kappa}{(2\times 3^N)} \sum\limits_{m=0}^{3^N-1} \sum\limits_{i=1}^N (d_m^i)^2 \nabla^2_{ii} f(x_n)
&=  \dfrac{\kappa}{(2\times 3^N)} \sum\limits_{i=1}^N \sum\limits_{m=0}^{3^N-1} (d_m^i)^2 \nabla^2_{ii} f(x_n) \\
&=   \kappa\sum\limits_{i=1}^N \nabla^2_{ii} f(x_n), \text{ a.s.}
\end{align*} 
Combining the above followed by some algebra, we obtain 
\begin{align*}
&\dfrac{\kappa}{(2\times 3^N)^2}  \E\left[\left. \left((d_m^l)^2-(2\times 3^N)\right) \left(\sum\limits_{i=1}^N (d_m^i)^2 \nabla^2_{ii} f(x_n) \right.\right.\right.
\left.\left.\left. + 2\sum\limits_{i=1}^{N-1}\sum\limits_{j=i+1}^N d_m^i d_m^j \nabla^2_{ij} f(x_n)\right)\right| \F_n\right]\\
&= \nabla^2_{ll} f(x_n). \stepcounter{equation}\tag{\theequation}\label{eq:diag}
\end{align*}
The fourth term in \eqref{eq:h2} is zero from Lemma \ref{lemma:MNid}.

Denote the product in \eqref{eq:h1} by $(A)$ and the $(i,j)$th term there by $(A)_{i,j}$. Then,
\begin{align*}
\E[\widehat H_n(i,j) \mid \F_n] & =  \E\left[ (A) \mid \F_n\right] + \E[\xi_n^+ + \xi_n^- - 2\xi_n \mid \F_n] \stepcounter{equation}\tag{\theequation}\label{eq:2rdsa-l12}\\
&= \nabla^2_{ij} f(x_n) + O(\delta_{(n3^N)}^2).
\end{align*}
The last equality above follows from (C3), while the term involving the factor $(A)$ reduces to the true Hessian with a bias of $O(\delta_{(n3^N)}^2)$ due to \eqref{eq:offidag} and \eqref{eq:diag}.

The main claim follows.

\textbf{Case 2: Permutation matrix-based perturbations}

From the first step involving Taylor series expansions in Case 1, we have 
\begin{align*}
&\dfrac{f(x_n+\delta_{(n N + m)} d_m) + f(x_n-\delta_{(n  N + m)} d_m) - 2 f(x_n)}{\delta_{(n N + m)}^2}\\
=& \sum\limits_{i=1}^N (d_m^i)^2 \nabla^2_{ii} f(x_n) + 2\sum\limits_{i=1}^{N-1}\sum\limits_{j=i+1}^N d_m^i d_m^j \nabla^2_{ij} f(x_n)
+ O(\delta_{(n N + m)}^2).\stepcounter{equation}\tag{\theequation}\label{eq:2rdsa-taylor1}
\end{align*}
Using the facts that 
$\sum\limits_{m=0}^{N-1} \sum\limits_{i=1}^N d_m^i d_m^j =0$ for $i\ne j$ and  $\sum\limits_{m=0}^{N-1} (d_m^i)^2=1$ for any $i$, we obtain
\begin{align*}
& \sum\limits_{m=0}^{N-1} \left[\dfrac{f(x_n+\delta_{(n N + m)} d_m) + f(x_n-\delta_{(n N + m)} d_m) - 2 f(x_n)}{\delta_{(n N + m)}^2}\right]\\
& = \sum\limits_{i=1}^N\sum\limits_{m=0}^{N-1} (d_m^i)^2 \nabla^2_{ii} f(x_n)
+ 2\sum\limits_{i=1}^{N-1}\sum\limits_{j=i+1}^N \nabla^2_{ij} f(x_n)\sum\limits_{m=0}^{N-1}d_m^i d_m^j 
+ O(\delta_{(n N + m)}^2)\stepcounter{equation}\tag{\theequation}\label{eq:2rdsa-h12}\\
&=  \nabla^2_{ii} f(x_n) + O(\delta_{(n N)}^2).
\end{align*}

Denote the term on LHS in \eqref{eq:2rdsa-h12} by $(B)$, and following an argument similar to that used in simplifying the noise terms in \eqref{eq:2rdsa-l12}, we obtain
\begin{align*}
\E[\widehat H_n(i,i) \mid \F_n] & =  \E\left[ (B) \mid \F_n\right] + \E[\xi_n^+ + \xi_n^- - 2\xi_n \mid \F_n]\\
& = \nabla^2_{ii} f(x_n) + O(\delta_{(n N)}^2).
\end{align*}
Hence proved.
\end{proof}

\subsubsection{Proof of Theorem \ref{thm:2rdsa-dp-H}} \label{sec:2rdsa-dp-proofs-H}
\begin{proof}
Follows in a similar manner as that of the proofs of Theorem 5 and 6 in \cite{prashanth2017rdsa}.
\end{proof}

\subsubsection{Proof of Theorem \ref{thm:2rdsa-dp-rate}} \label{sec:2rdsa-dp-proofs-rate}
\begin{proof}
From the quadratic assumption, we have that $H(x)$ is a constant, independent of $x$.
From the proof leading up to \eqref{eq:2rdsa-l12}, we have that 
$\widehat H_n = \nabla^2 H(x_n) = H^*$. 
Now, we follow the technique from \cite{spall-jacobian} to derive the main claim. 

Notice that
\[ \Lambda_k = (1 - b_k)\Lambda_{k-1} + b_k(\overline H_k - H^*) = (1 - b_k)\Lambda_{k-1}. \]
Unrolling the recursion above, we obtain
\begin{align*}
\Lambda_n &= \left[\prod_{k=1}^{n}(1 - b_k)\right] \Lambda_0.
\end{align*}
Using the above, we can arrive at a simpler representation for the $ \textrm{trace} \left[\E(\Lambda_n^T \Lambda_n)\right]  $ as follows:
\begin{align*}
\E(\Lambda_n^T \Lambda_n) &= \left[\prod_{k=1}^{n}(1 - b_k)\right]^2 \E(\Lambda_0^T \Lambda_0), \textrm{ leading to }\\
\textrm{trace} \left[\E(\Lambda_n^T \Lambda_n)\right] &= \left[\prod_{k=1}^{n}(1 - b_k)\right]^2 \textrm{trace} \left[ \E(\Lambda_0^T \Lambda_0)\right].
\end{align*}
Simplifying further,  using the fact that $ 1-b_k = e^{-b_k}(1-O(b_k^2))$, with the $ O(b_k^2) $ being strictly positive as  $ e^{-b_k} $ is convex, we have
\begin{align}
\textrm{trace}\left[\E(\Lambda_n^T \Lambda_n)\right] &= e^{-2 b_{sum}(1,n)} c_{0n} \textrm{trace} \left[ \E(\Lambda_0^T \Lambda_0)\right],\label{eq:tr12}
\end{align}
where $ b_{sum}(i,j) = \sum_{k=i}^{j}b_k $
and $ c_{kn} = \left[\prod_{i=k+1}^{n} (1 - {O}(b_i^2))  \right]^2, k \geq 0$ and $c_{nn} = 1 $. Since $ 0 < b_k < 1$,  $\forall k \ge 2$, and $ r > 1/2 $, the $ c_{kn} $ are uniformly bounded in magnitude. Further, 
\[ b_{sum}(1,n) \geq \int_{1}^{n+1}\frac{b_0}{x^r} dx   \geq \left(\frac{b_0}{1-r} \right) \left( n^{1-r} - 1\right). \]
Using the bound derived above in \eqref{eq:tr12}, we obtain
\begin{align*}
\textrm{trace} \left[\E(\Lambda_n^T \Lambda_n)\right] &\le e^{-2b_0 n^{1-r} / (1 - r)} e^{2b_0} c_{0n} \textrm{trace} \left[ \E(\Lambda_0^T \Lambda_0)\right].
\end{align*}
The main claim follows.
\end{proof}

\section{Numerical Experiments}
\label{sec:expts}
\pgfplotsset{every x tick label/.append style={font=\scriptsize}}
\usetikzlibrary{patterns}
\usetikzlibrary{pgfplots.groupplots}

\subsection[Implementation] {Implementation\footnote{The implementation is available at \url{https://github.com/prashla/RDSA/archive/master.zip}.}}
We consider the following problem:
\begin{align}
	\min_{x} \; \E_{\xi} \left[ F(x,\xi) \right],
	\label{eq:obj}
\end{align}
where $ F(x,\xi) $ is the sample observation of the objective function $ f(x) $ corrupted with zero mean noise $ \xi $.
In particular, the noise is $ [x^T,1]\xi $, where $ \xi $ is a multivariate Gaussian distribution with mean zero and covariance $  \sigma^2 \I_{N+1} $. A similar noise structure has been used earlier in the implementation of both RDSA and SPSA algorithms in \cite{prashanth2017rdsa} and \cite{spall2000adaptive}, respectively. For all the experiments, we consider two different settings of noise: (i) low noise with $ \sigma =  0.001$; and (ii) high noise with $ \sigma = 0.1 $.

We consider three different functional forms for $ F(x,\xi) $, namely quadratic, fourth-order and Rastrigin, respectively, in both $ N = 5 $ and $ N = 10 $ dimensions, for evaluating our algorithms. Before describing the example functions, we present the details about the algorithms implemented.

We implement the first-order and second-order algorithms proposed in this paper and compare them with several baselines that are based on the simultaneous perturbation method. 

The first-order algorithms implemented include 1RDSA-Lex-DP, 1RDSA-Perm-DP and 1RDSA-KW-DP - the three deterministic perturbation variants of 1RDSA (see Section \ref{sec:1rdsa-dp} for a detailed description). We compare  these algorithms with the following baselines: RDSA with uniform and asymmetric Bernoulli perturbations proposed in \cite{prashanth2017rdsa} and henceforth referred to as 1RDSA-Unif and 1RDSA-AsymBer, respectively, and SPSA with Bernoulli perturbations, henceforth referred to as 1SPSA.

The second-order algorithms implemented include 2RDSA-Lex-DP and 2RDSA-Perm-DP - the two deterministic perturbation variants of 2RDSA (see Section \ref{sec:2rdsa-dp} for a detailed description). We compare  these algorithms with the following baselines: second-order RDSA with uniform and asymmetric Bernoulli perturbations proposed in \cite{prashanth2017rdsa} and henceforth referred to as 2RDSA-Unif and 2RDSA-AsymBer, respectively, and second-order SPSA with Bernoulli perturbations, henceforth referred to as 2SPSA.

The settings of the parameters $ \delta_n  $ and $ a_n $ for both first- and second-order algorithms are listed in Table \ref{tab:parameter-setting}, and a similar setting has been used in implementation of both RDSA and SPSA algorithms in \cite{prashanth2017rdsa} and \cite{spall2000adaptive}, 
respectively. 
The distribution parameter for RDSA variants is set as follows: $ \eta = 1 $ for RDSA-Unif, $ \epsilon = 0.0001 $ for 1RDSA-AsymBer, and $ \epsilon = 1  $ for 2RDSA-AsymBer, and a similar setting has been used earlier for RDSA implementation in \cite{prashanth2017rdsa}.
Each coordinate of the parameter is projected onto the set $ [-2.048,2.047] $, which helps to keep the iterates stable. All results are averages over 50 independent runs.

\begin{table}[H]
	\caption{Step-size and perturbation constant parameter settings, for first and second order algorithms.}
	\label{tab:parameter-setting}
	\centering
	\begin{tabular}{|c|c|c|}
		\toprule
		Algorithms  &$ \delta_n  $ & $ a_n $\\
		\midrule
		First-order &$ 1.9/n^{0.101}$ & $1/(n + 50) $\\
		Second-order &$ 3.8/n^{0.101}  $ & $ 1/n^{0.6} $\\
		\bottomrule
	\end{tabular}
\end{table}

To compare the algorithms' performance, we use parameter error as the performance metric. For a given simulaton budget, the parameter error measures the distance between the final iterate obtained after the final update iteration and the optimum parameter $x^*$. More precisely, we use the following form for the parameter error, after suitable normalization:
\[    \text{Parameter error} = \frac{|| x_{{\tau}} - x^* ||^2}{|| x_{0} - x^* ||^2},\]
where $ {\tau} $ is the number of times $ x $ is updated until the end of the simulation. Notice that $ {\tau} $ varies with the algorithm and the number of function measurements. For example, with a budget of 5000 measurements for $ N = 10 $, $ {\tau} = 250 $ for 1RDSA-Perm-DP and 1RDSA-KW-DP as they use $ 2N $ measurements per iteration. Further, $ \tau=2500 $ for 1SPSA as well as for both variants of 1RDSA, as they use two measurements per iteration. Notice that 1RDSA-Lex-DP does not make much progress under low simulation budgets, as it requires $  2 \times 3^N$ measurements per iteration.
On the other hand, for second-order algorithms, an initial 20\% of the measurements were used up by the corresponding first-order algorithm for initialization. Thus, for $ N = 10 $ and a budget of 5000 measurements, the initial 1000 measurements are used for the first-order algorithm and the remaining 4000 are used by the second-order algorithm. As a consequence of the simulation budget split, the number of update iterations $\tau = 4000/30 \approx 133 $ for 2RDSA-Perm-DP, $ 4000/4 = 1000 $ for 2SPSA, and $ 4000/3 \approx 1333 $ for 2RDSA algorithms. The 2RDSA-Lex-DP algorithm does not output a meaningful parameter with a low simulation budget, owing to its high inner loop length. The difference here is due to the fact that 2RDSA-Perm-DP uses $ 3N $ measurements per iteration, while 2RDSA-Lex-DP needs $  3 \times 3^N$, 2RDSA needs 3 and 2SPSA needs 4.

\begin{figure*}[t]
	\centering
	\hspace{-2em}
	\begin{minipage}{\textwidth}
		\begin{tabular}{cc}			
			\begin{subfigure}{0.5\textwidth}
				\centering
				\label{fig:first-order-quadratic}
				\hspace{-2em} 
				\tabl{c}{\scalebox{0.95}{
						\begin{tikzpicture}
						\begin{axis}[
						ylabel={Parameter error $ \cdot 10^-6 $},
						enlargelimits=0.15,
						legend pos=north west,
						y filter/.code={\pgfmathparse{#1*10^6}\pgfmathresult},
						symbolic x coords={2RDSA-Lex-DP,2RDSA-AsymBer,2RDSA-Unif,2SPSA},
						xtick=data,
						x tick label style={rotate=45,anchor=east},
						ybar=5pt,
						bar width=9pt,
						]	
						
						
					\addplot
				table[x=savefinal1,y=savefinal2,col sep=comma] {pgfplots_data/second_order_quad_sigma_0.001.dat}; 
						
						
						
						\end{axis}
						\end{tikzpicture}
				}}
				\caption{$ \sigma = 0.001 $}
			\end{subfigure}
			&
			\begin{subfigure}{0.5\textwidth}
				\centering
				\label{fig:second-order-quadratic}
				\hspace{-2em} 
				\tabl{c}{\scalebox{0.95}{
						\begin{tikzpicture}
						\begin{axis}[
						ylabel=Parameter error $\cdot 10^-2 $,
						enlargelimits=0.15,
						y filter/.code={\pgfmathparse{#1*10^2}\pgfmathresult},
						symbolic x coords={2RDSA-Perm-DP,2RDSA-Lex-DP,2RDSA-AsymBer,2RDSA-Unif,2SPSA},
						xtick=data,
						x tick label style={rotate=45,anchor=east},
							ybar=5pt,
						bar width=9pt,
						legend pos=north west,
						]
						
						
						
						\addplot
						table[x=savefinal1,y=savefinal2,col sep=comma] {pgfplots_data/second_order_quad_sigma_0.1.dat}; 
						
						
						
						\end{axis}
						\end{tikzpicture}
				}}
				\caption{$ \sigma = 0.1 $}
			\end{subfigure}
		\end{tabular}
        \caption{Parameter error for various second-order algorithms under the quadratic objective \eqref{eq:quad-fun} for a five-dimensional problem with a simulation budget of 50000 and $\sigma = 0.001 $ and $0.1$.}
		\label{fig:first-order-bar}
	\end{minipage}
\end{figure*}

\subsection{Example 1: Quadratic objective}
Let $ A $ be such that $ N A $ is an $ N \times N $ upper triangular matrix with $ a_{ij} = 1 $ for $ i \le j $ and let $ b $ be an $ N $-dimensional vector of ones. Then the quadratic objective function is defined as follows:
\begin{align}
	F(x,\xi) = x^TAx + b^Tx + \xi.
	\label{eq:quad-fun}
\end{align}
The initial point $ x_0 $ is set to the $ N $-dimensional vector of ones and the optimal point $ x^*$ is dimension dependent. For instance, with $ N = 10 $, the optimal point $ x^*$, is the $10$-dimensional vector of $ -0.9091 $ for the choice of $A$ and $b$ described earlier. Note that $f(x^*) = \E_{\xi}[ F(x^*,\xi)]= -4.55$.

Figures \ref{fig:first-order-bar} and \ref{fig:first-order-bar1} present the parameter error for the first-order and second-order algorithms under the quadratic objective \eqref{eq:quad-fun} for dimension 5 and $ \sigma = 0.001$ and $0.1 $.

Among the first-order algorithms, for both settings of noise, 1RDSA-Perm-DP and 1RDSA-KW-DP exhibited similar performance and outperformed the other algorithms. The parameter error in 1RDSA-Perm-DP and 1RDSA-KW-DP is of the order of $ 10^{-5} $, while for the others, the same is of the order of $ 10^{-3} $. 1RDSA-Lex-DP (not shown in the figure) showed a parameter error that was an order of magnitude higher than the other algorithms.

Among the second-order algorithms, for both settings of noise, 2RDSA-Lex-DP exhibited the best performance. Furthermore, it is interesting to see that for both settings of noise 2RDSA-Perm-DP, and 2RDSA-Lex-DP gave consistent performance with parameter error of the order of $ 10^{-4} $ and $ 10^{-7} $, respectively, while the parameter error of 2RDSA-AsymBer, 2RDSA-Unif, and 2SPSA increased with noise. Further, the benefit of using second-order algorithms is more noticeable under the low noise setting. 

For the low noise setting, the parameter error of 2RDSA-Perm-DP is not shown in the figure, for the sake of readability in comparing the errors of the other algorithms.




\begin{figure}[H]
	\centering
	\begin{tikzpicture}
\begin{axis}[
ylabel={Parameter error $ \cdot 10^-3 $},
enlargelimits=0.15,
legend pos=north west,
						y filter/.code={\pgfmathparse{#1*1000}\pgfmathresult},
symbolic x coords={1RDSA-Perm-DP,1RDSA-KW-DP,1RDSA-AsymBer,1RDSA-Unif,1SPSA},
xtick=data,
x tick label style={rotate=45,anchor=east},
ybar=5pt,
bar width=9pt,
]	


\addplot
table[x=savefinal1,y=savefinal2,col sep=comma] {pgfplots_data/first_order_quad_sigma_0.001.dat}; 

\addplot
table[x=savefinal1,y=savefinal2,col sep=comma] {pgfplots_data/first_order_quad_sigma_0.1.dat}; 

\legend{$ \sigma = 0.001 $, $\sigma = 0.1$}

\end{axis}
\end{tikzpicture}
	\caption{Parameter error for various first-order algorithms under the quadratic objective \eqref{eq:quad-fun} for a five-dimensional problem with a simulation budget of 50000 and $\sigma = 0.001 $ and $0.1$.}
	\label{fig:first-order-bar1}
\end{figure}

\begin{figure}[H]
		\centering
		\pgfplotsset{
			legend entry/.initial=,
			every axis plot post/.code={%
				\pgfkeysgetvalue{/pgfplots/legend entry}\tempValue
				\ifx\tempValue\empty
				\pgfkeysalso{/pgfplots/forget plot}%
				\else
				\expandafter\addlegendentry\expandafter{\tempValue}%
				\fi
			},
		}
	\begin{tikzpicture} 
	\begin{axis}[
	ylabel=Parameter error $ \cdot 10^-2 $,
	xlabel = \# Function measurements,
	scaled y ticks = false,
	y filter/.code={\pgfmathparse{#1*100}\pgfmathresult},
	]
	\addplot[clip marker paths=true,color = blue,mark = none,thick,legend entry=1RDSA-Perm-DP,] table [x index=0, y index=1] {pgfplots_data/results_onerdsa_perm_dp_dim_10_type_1.txt};
	\addplot[only marks, blue, mark=*, mark options={blue}, error bars/.cd,y dir=both,  y explicit] table [x index=0, y index=1, y error index=2]{pgfplots_data/results_onerdsa_perm_dp_dim_10_type_1.txt};
	
	\addplot[clip marker paths=true,color = green,mark = none,thick,legend entry=1RDSA-KW-DP,] table [x index=0, y index=1] {pgfplots_data/results_onerdsa_kw_dp_dim_10_type_1.txt};
	\addplot[only marks, green, mark=*, mark options={green}, error bars/.cd,y dir=both,  y explicit] table [x index=0, y index=1, y error index=2]{pgfplots_data/results_onerdsa_kw_dp_dim_10_type_1.txt};
	
	\addplot[clip marker paths=true,color = red,mark = none,thick,legend entry=1RDSA-AsymBer,] table [x index=0, y index=1] {pgfplots_data/results_onerdsa_asymber_dim_10_type_1.txt};
	\addplot[only marks, red, mark=*, mark options={red}, error bars/.cd,y dir=both,  y explicit] table [x index=0, y index=1, y error index=2]{pgfplots_data/results_onerdsa_asymber_dim_10_type_1.txt};

	\addplot[clip marker paths=true,color = black,mark = none,thick,legend entry=1RDSA-Unif,] table [x index=0, y index=1] {pgfplots_data/results_onerdsa_unif_dim_10_type_1.txt};
	\addplot[only marks, black, mark=*, mark options={black}, error bars/.cd,y dir=both,  y explicit] table [x index=0, y index=1, y error index=2]{pgfplots_data/results_onerdsa_unif_dim_10_type_1.txt};
	
	\addplot[clip marker paths=true,color = cyan,mark = none,thick,legend entry=1SPSA,] table [x index=0, y index=1] {pgfplots_data/results_onespsa_dim_10_type_1.txt};
	\addplot[only marks, cyan, mark=*, mark options={cyan}, error bars/.cd,y dir=both,  y explicit] table [x index=0, y index=1, y error index=2]{pgfplots_data/results_onespsa_dim_10_type_1.txt};

	\end{axis} 
	\end{tikzpicture}
	\caption{Evolution of the parameter error as the simulation budget is varied, for the first-order algorithms under the quadratic objective with $ N=10 $ and $ \sigma = 0.001 $.}
	\label{fig:first-order-line}
\end{figure}

Figure \ref{fig:first-order-line} compares the parameter error of 1RDSA-Perm-DP, 1RDSA-KW-DP, and both variants of 1RDSA and 1SPSA algorithms for the quadratic objective with dimension $10$, $ \sigma = 0.001 $ and a simulation budget of $50000$ function measurements. As in the case of the problem with dimension $5$, 1RDSA-Perm-DP and 1RDSA-KW-DP performed best, while the result of 1RDSA-Lex-DP is not reported due to its high inner loop length.

\subsection{Example 2: Fourth-order objective}
The function given below has been used for evaluating both RDSA and SPSA algorithms in \cite{prashanth2017rdsa} and \cite{spall2000adaptive}, respectively.
\begin{align}
	F(x,\xi) = x^TA^TAx +  0.1\sum_{j=1}^{N}(Ax)^3_j + 0.01\sum_{j=1}^{N}(Ax)^4_j + \xi ,
	\label{eq:fourth-order-fun}
\end{align}

\begin{figure}[H]
	\centering
\scalebox{.96}{
		\begin{tikzpicture}[
		declare function = {
			Z(\x,\y) = 0.25*\x^2 + 0.5*\x*\y + 0.5*\y^2 + 0.1*((0.5*\x + 0.5*\y)^3 + (0.5*\y)^3) + 0.01*((0.5*\x + 0.5*\y)^4 + (0.5*\y)^4);
		}
		]
		\begin{axis}
		[
		domain=-2:2,
		samples=40, 
		trig format plots=rad,
		xlabel=$x$,ylabel=$y$,
		enlargelimits=false,
		3d box=complete,
		grid,
		grid style={dashed,gray!40},
		axis line style={gray!40},
		colormap/bluered,
		]
		\addplot3 [surf] {Z(x,y)};
		\end{axis}
		\end{tikzpicture}}
		\caption{A plot of the fourth-order objective \eqref{eq:fourth-order-fun}, $ N=2 $.}
		\label{fig:fourth-order}
	\end{figure}
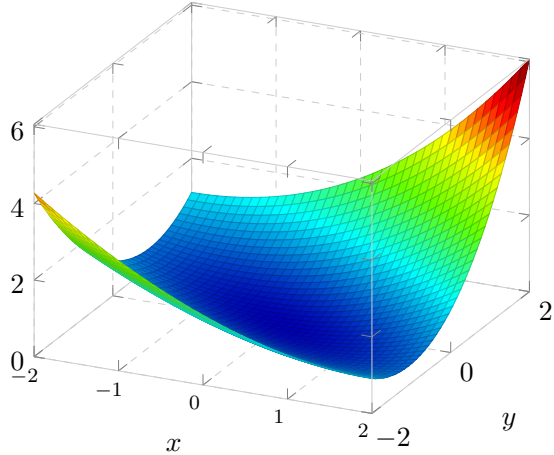

\noindent
where $ A $ and $ \xi $ are the same as in the quadratic objective. The initial point $ x_0 $ is set to the $ N $-dimensional vector of ones and the optimal point $ x^* $ is the $ N $-dimensional vector of zeros, with $f(x^*) = \E_{\xi}[ F(x^*,\xi)]= 0$. Figure \ref{fig:fourth-order} shows a plot of the fourth-order objective \eqref{eq:fourth-order-fun}.

Figure \ref{fig:second-order-bar} presents the parameter error for both first and second order algorithms in the case of the fourth-order objective \eqref{eq:fourth-order-fun} with dimension 5 and  $\sigma = 0.001 $ and $0.1$. Among the first-order algorithms, for both settings of noise, all algorithms except 1RDSA-Lex-DP exhibited similar performance. 

Among the second-order algorithms, similar to the quadratic case, for both settings of noise, 2RDSA-Perm-DP and 2RDSA-Lex-DP gave consistent performance with parameter error of the order of $ 10^{-2} $. Under the low noise setting, i.e., $\sigma=0.001$, 2RDSA-Unif exhibited the best performance, while under the high noise setting with $\sigma=0.1$, 2RDSA-Lex-DP outperformed the other algorithms. Thus, we observe that 2RDSA-Lex-DP and 2RDSA-Perm-DP algorithms are more tolerant to the noise, as compared to their random counterparts 2RDSA-AsymBer, 2RDSA-Unif, and 2SPSA.

Since the fourth-order objective is more difficult to optimize than the quadratic one, we observe that under the low noise setting, the parameter error in the case of the fourth-order objective for the first-order algorithms is higher, of the order of $ 10^{-2} $ compared to $ 10^{-5} $ for the quadratic case. For the second-order algorithms, the same is also of the order of $ 10^{-2} $ compared to $ 10^{-6} $ for the quadratic case. A similar trend is observed in the  high noise regime.

Figure \ref{fig:second-order-line} compares the parameter error of 2RDSA-Perm-DP, as well as both variants of 2RDSA and 2SPSA algorithms, for the fourth-order objective function with dimension $10$, $ \sigma = 0.001 $ and a simulation budget of $50000$ function measurements. As in the five-dimensional problem, 2RDSA-Unif and 2SPSA exhibited similar performance and outperformed the other algorithms. The results of the 2RDSA-Lex-DP algorithm are not displayed as it requires $ 3 \times 3^{10} $ function measurements per iteration.

\begin{figure*}[t]
	\centering
	\hspace{-2em}
	\begin{minipage}{\textwidth}
		\begin{tabular}{cc}
			\begin{subfigure}{0.5\textwidth}
				\centering
				\label{fig:first-order-fourth}
				\tabl{c}{\scalebox{0.95}{
						
						\begin{tikzpicture}
						\begin{axis}[
						ylabel=Parameter error,
						enlargelimits=0.15,
						symbolic x coords={1RDSA-Perm-DP,1RDSA-KW-DP,1RDSA-Lex-DP,1RDSA-AsymBer,1RDSA-Unif,1SPSA},
						xtick=data,
						x tick label style={rotate=45,anchor=east},
						ybar=5pt,
						bar width=9pt,
						legend pos=north west,
							y tick label style={
							/pgf/number format/.cd,
							fixed,
							fixed zerofill,
							precision=2,
							/tikz/.cd
						},
						]

						
							\addplot
						table[x=savefinal1,y=savefinal2,col sep=comma] {pgfplots_data/first_order_fourth_sigma_0.001.dat}; 
						
						\addplot
						table[x=savefinal1,y=savefinal2,col sep=comma] {pgfplots_data/first_order_fourth_sigma_0.1.dat}; 
						
						\legend{$ \sigma = 0.001 $, $\sigma = 0.1$}
						
						\end{axis}
						\end{tikzpicture}
				}}
				\caption{First-order algorithms}
			\end{subfigure}
			&
			\begin{subfigure}{0.5\textwidth}
				\centering
				\label{fig:second-order-fourth}
				\tabl{c}{\scalebox{0.95}{

						\begin{tikzpicture}
						\begin{axis}[
						ylabel=Parameter error, 
						enlargelimits=0.15,
						symbolic x coords={2RDSA-Perm-DP,2RDSA-Lex-DP,2RDSA-AsymBer,2RDSA-Unif,2SPSA},
						xtick=data,
						x tick label style={rotate=45,anchor=east},
						ybar=5pt,
						bar width=9pt,
						legend pos=north west,
						]

						\addplot
					table[x=savefinal1,y=savefinal2,col sep=comma] {pgfplots_data/second_order_fourth_sigma_0.001.dat}; 
					
					\addplot
					table[x=savefinal1,y=savefinal2,col sep=comma] {pgfplots_data/second_order_fourth_sigma_0.1.dat}; 
					
					\legend{$\sigma = 0.001$, $\sigma = 0.1$}

						\end{axis}
						\end{tikzpicture}
						
				}}
				\caption{Second-order algorithms}
			\end{subfigure}
		\end{tabular}
		\caption{Parameter error for various algorithms under the fourth-order objective \eqref{eq:fourth-order-fun} for a five-dimensional problem with a simulation budget of 50000 and $\sigma = 0.001 $ and $0.1$.}
		\label{fig:second-order-bar}
	\end{minipage}
\end{figure*}
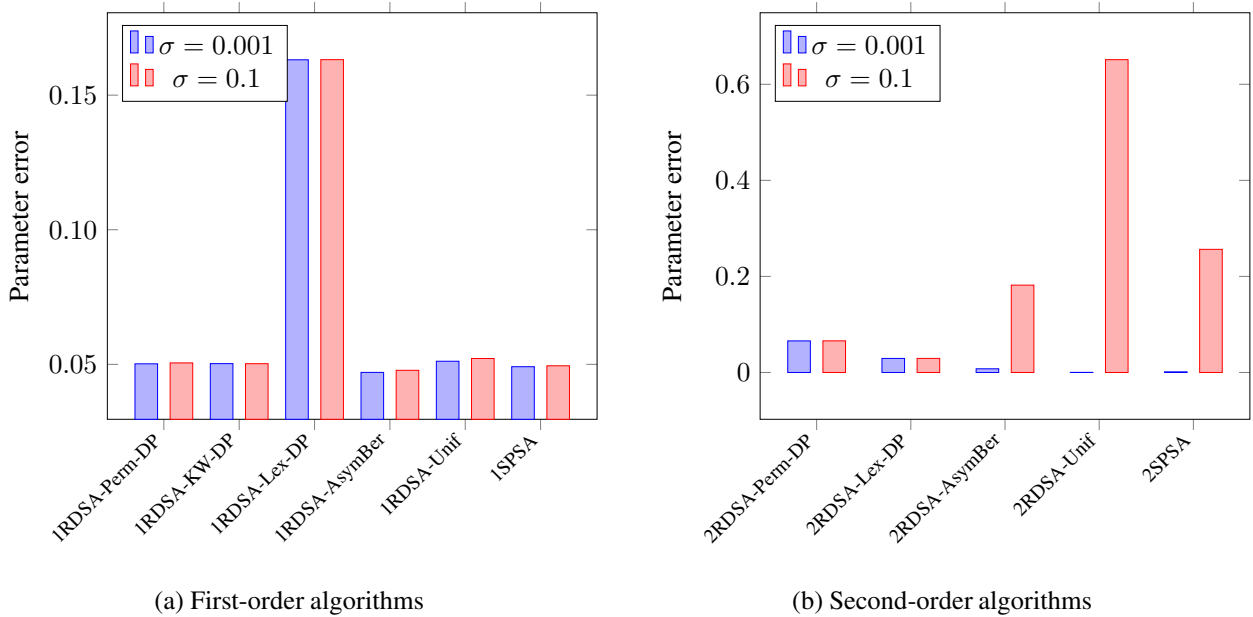

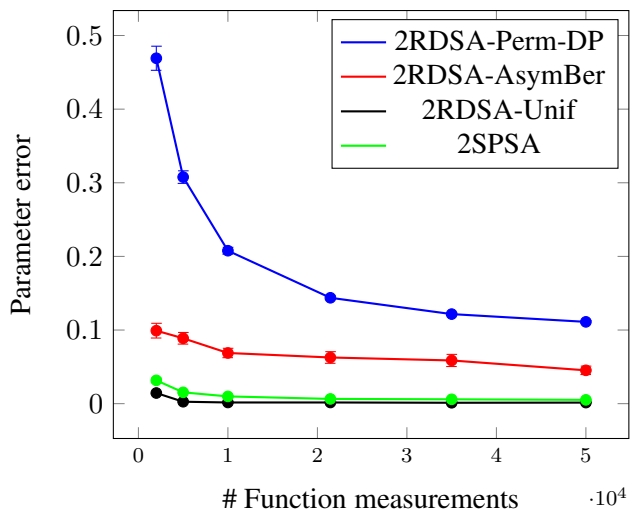
\begin{figure}[H]
		\centering
		\pgfplotsset{
			legend entry/.initial=,
			every axis plot post/.code={%
				\pgfkeysgetvalue{/pgfplots/legend entry}\tempValue
				\ifx\tempValue\empty
				\pgfkeysalso{/pgfplots/forget plot}%
				\else
				\expandafter\addlegendentry\expandafter{\tempValue}%
				\fi
			},
		}
		\scalebox{1}{
	\begin{tikzpicture} 
	\begin{axis}[
	ylabel=Parameter error,
	xlabel = \# Function measurements,
	]
	\addplot[clip marker paths=true,color = blue,mark = none,thick,legend entry=2RDSA-Perm-DP,] table [x index=0, y index=1] {pgfplots_data/results_twordsa_perm_dp_dim_10_type_2.txt};
	\addplot[only marks, blue, mark=*, mark options={blue}, error bars/.cd,y dir=both,  y explicit] table [x index=0, y index=1, y error index=2]{pgfplots_data/results_twordsa_perm_dp_dim_10_type_2.txt};
	
	\addplot[clip marker paths=true,color = red,mark = none,thick,legend entry=2RDSA-AsymBer,] table [x index=0, y index=1] {pgfplots_data/results_twordsa_asymber_dim_10_type_2.txt};
	\addplot[only marks, red, mark=*, mark options={red}, error bars/.cd,y dir=both,  y explicit] table [x index=0, y index=1, y error index=2]{pgfplots_data/results_twordsa_asymber_dim_10_type_2.txt};

	\addplot[clip marker paths=true,color = black,mark = none,thick,legend entry=2RDSA-Unif,] table [x index=0, y index=1] {pgfplots_data/results_twordsa_unif_dim_10_type_2.txt};
	\addplot[only marks, black, mark=*, mark options={black}, error bars/.cd,y dir=both,  y explicit] table [x index=0, y index=1, y error index=2]{pgfplots_data/results_twordsa_unif_dim_10_type_2.txt};
	
	\addplot[clip marker paths=true,color = green,mark = none,thick,legend entry=2SPSA,] table [x index=0, y index=1] {pgfplots_data/results_twospsa_dim_10_type_2.txt};
	\addplot[only marks, green, mark=*, mark options={green}, error bars/.cd,y dir=both,  y explicit] table [x index=0, y index=1, y error index=2]{pgfplots_data/results_twospsa_dim_10_type_2.txt};
	
	\end{axis} 
	\end{tikzpicture}}
	\caption{Evolution of the parameter error as the simulation budget is varied, for the second-order algorithms under the fourth-order objective with $ N=10 $ and $\sigma = 0.001$.}
	\label{fig:second-order-line}
\end{figure}

\subsection{Example 3: Rastrigin objective}

The Rastrigin objective function is defined as follows:
\begin{align}
	F(x,\xi) = \sum_{i=1}^{N}(x_i^2 - 10\cos(2\pi x_i))  + 10N + 1 + \xi,
	\label{eq:Rastrigin-fun}
\end{align}

\noindent
where $ \xi $ is the same as for the quadratic objective. The initial point $ x_0 $ is set to the $ N $-dimensional vector of twos and the optimal point $ x^* $ is the $ N $-dimensional vector of zeros, with $f(x^*) = \E_{\xi}[ F(x^*,\xi)]= 1$. Figure \ref{fig:rastrigin} shows a plot of the Rastrigin objective \eqref{eq:Rastrigin-fun}, which has many local minima.

\begin{figure*}[t]
	\centering
	\hspace{-2em}
	\begin{minipage}{\textwidth}
		\begin{tabular}{cc}
			\begin{subfigure}{0.5\textwidth}
				\centering
				\label{fig:first-order-Rastrigin}
				\tabl{c}{\scalebox{0.95}{
						
						\begin{tikzpicture}
						\begin{axis}[
						ylabel=Parameter error,
						enlargelimits=0.15,
						symbolic x coords={1RDSA-Perm-DP,1RDSA-KW-DP,1RDSA-Lex-DP,1RDSA-AsymBer,1RDSA-Unif,1SPSA},
						xtick=data,
						x tick label style={rotate=45,anchor=east},
						ybar=5pt,
						bar width=9pt,
						legend pos=north west,
						]
						
						\addplot
						table[x=savefinal1,y=savefinal2,col sep=comma] {pgfplots_data/first_order_rastrigin_sigma_0.001.dat}; 
						
						\addplot
						table[x=savefinal1,y=savefinal2,col sep=comma] {pgfplots_data/first_order_rastrigin_sigma_0.1.dat}; 
						
						\legend{$ \sigma = 0.001 $, $\sigma = 0.1$}
						
						
						\end{axis}
						\end{tikzpicture}
					}}
					\caption{First-order algorithms}
				\end{subfigure}
				&
				\begin{subfigure}{0.5\textwidth}
					\centering
					\label{fig:second-order-Rastrigin}
					\tabl{c}{\scalebox{0.95}{

							\begin{tikzpicture}
							\begin{axis}[
							ylabel=Parameter error,
							enlargelimits=0.15,
							symbolic x coords={2RDSA-Perm-DP,2RDSA-Lex-DP,2RDSA-AsymBer,2RDSA-Unif,2SPSA},
							xtick=data,
							x tick label style={rotate=45,anchor=east},
							ybar=5pt,
							bar width=9pt,
							legend pos=north west,
							]
							
							\addplot
							table[x=savefinal1,y=savefinal2,col sep=comma] {pgfplots_data/second_order_rastrigin_sigma_0.001.dat}; 
							
							\addplot
							table[x=savefinal1,y=savefinal2,col sep=comma] {pgfplots_data/second_order_rastrigin_sigma_0.1.dat}; 
							
							\legend{$ \sigma = 0.001 $, $\sigma = 0.1$}
							
							\end{axis}
							\end{tikzpicture}
							
						}}
						\caption{Second-order algorithms}
					\end{subfigure}
				\end{tabular}
				\caption{Parameter error for various algorithms under the Rastrigin objective \eqref{eq:Rastrigin-fun} for a five-dimensional problem and a simulation budget of 50000 and $\sigma = 0.001 $ and $0.1$.}
				\label{fig:Rastrigin-bar}
			\end{minipage}
		\end{figure*}
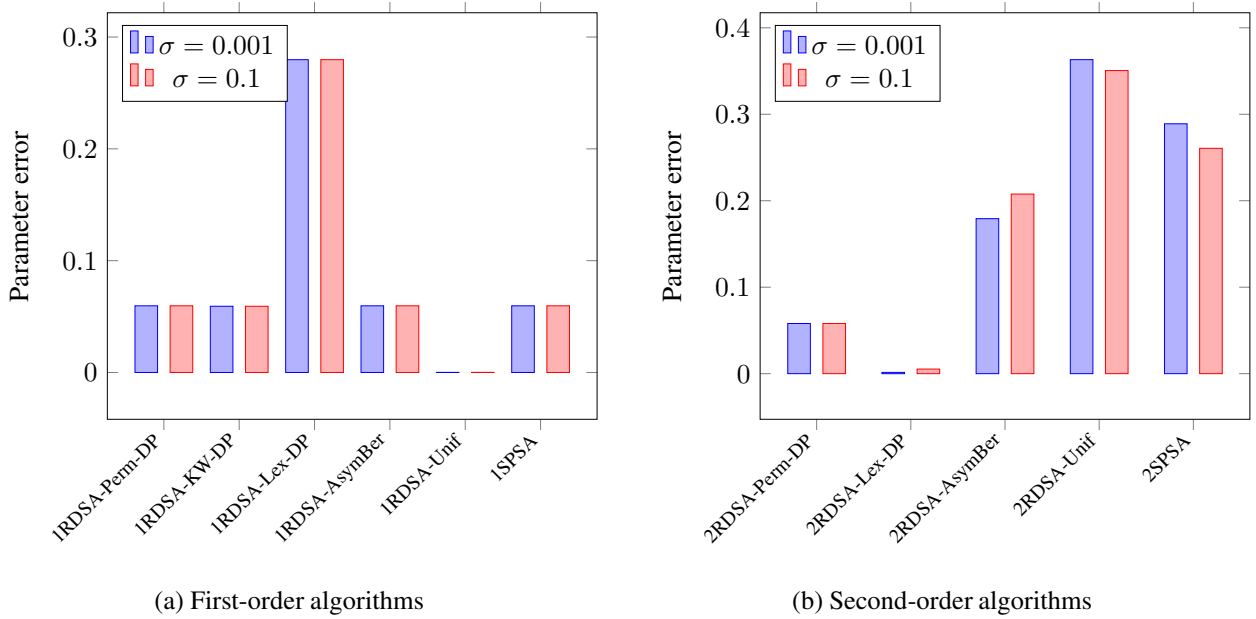
		
Figure \ref{fig:Rastrigin-bar} presents the parameter error for both first-order and second-order algorithms for the Rastrigin objective \eqref{eq:Rastrigin-fun} with dimension 5 and $\sigma = 0.001 $ and $0.1$.

For both settings of noise, among the first-order algorithms, 1RDSA-Unif outperformed the other algorithms, while 2RDSA-Lex-DP performed the best among the second-order algorithms.

Similar to the quadratic and fourth-order objective, first-order algorithms, 2RDSA-Perm-DP and 2RDSA-Lex-DP gave consistent performance under both settings of noise. 

In summary, for all the three objectives, among the first-order algorithms, we observed that 1RDSA-Perm-DP and 1RDSA-KW-DP performed best, while 1RDSA-Lex-DP showed poor performance. On the other hand, among the second-order algorithms, 2RDSA-Lex-DP exhibited the best performance.

\begin{figure}[H]
	\centering
	\begin{tikzpicture}[
	declare function = {
		Z(\x,\y) = \x^2 - 10*cos(2*pi*\x) + \y^2 - 10*cos(2*pi*\y) + 20 + 1;
	}
	]
	\begin{axis}
	[
	domain=-2:2,
	samples=40, 
	trig format plots=rad,
	xlabel=$x$,ylabel=$y$,
	enlargelimits=false,
	3d box=complete,
	grid,
	grid style={dashed,gray!40},
	axis line style={gray!40},
	colormap/bluered,
	]
	\addplot3 [surf] {Z(x,y)};
	\end{axis}
	\end{tikzpicture}
	\caption{A plot of the Rastrigin objective \eqref{eq:Rastrigin-fun}, $ N=2 $.}
	\label{fig:rastrigin}
\end{figure}
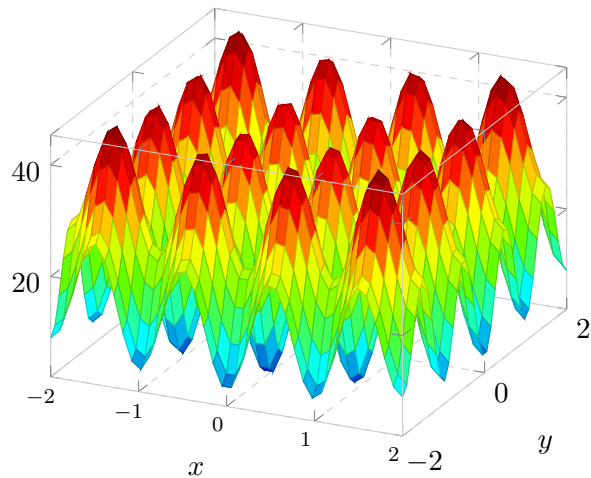

\section{Conclusions and Future Work}
\label{sec:concl}
We incorporated two novel deterministic perturbation (DP) schemes into the RDSA class of simultaneous perturbation algorithms. The proposed DP variants of first-order, as well as second-order, RDSA were shown to result in asymptotically unbiased gradient/Hessian estimates, thus resulting in provably convergent 1RDSA/2RDSA variants. We also performed numerical experiments to validate the theoretical findings. 

As future work, it would be interesting to perform a finite-time analysis of the RDSA schemes and study the impact of deterministic perturbations therein. To the best of our knowledge, non-asymptotic bounds are not available for vanilla RDSA algorithms, especially. in a general simulation optimization setting where convexity cannot be assumed. Another fruitful direction would be try the deterministic perturbation variants of RDSA in sophisticated applications, e.g., in transportation, networks and service systems.

\bibliographystyle{plainnat}
\bibliography{references}

\end{document}